\swapnumbers \theoremstyle{plain}
\newtheorem*{lemma*}{Lemma}
\newtheorem{lemma}[subsection]{Lemma}
\newtheorem*{theorem*}{Theorem}
\newtheorem{theorem}[subsection]{Theorem}
\newtheorem*{proposition*}{Proposition}
\newtheorem{proposition}[subsection]{Proposition}
\newtheorem*{corollary*}{Corollary}
\newtheorem{corollary}[subsection]{Corollary}
\theoremstyle{definition}
\newtheorem*{definition*}{Definition}
\newtheorem{definition}[subsection]{Definition}
\newtheorem*{example*}{Example}
\newtheorem*{algorithm*}{Algorithm}
\newtheorem*{remark*}{Remark}
\newtheorem{remark}[subsection]{Remark}
\def\al{\alpha}
\def\be{\beta}
\def\ga{\gamma}
\def\Ga{Gamma}
\def\de{\delta}
\def\ve{\varepsilon}
\def\la{\lambda}
\def\si{\sigma}
\def\ta{\tau}
\def\vh{\varphi}
\def\om{\omega}
\def\Ga{\Gamma}
\def\Ph{\Phi}
\def\Om{\Omega}
\def\C{\mathbb{C}}
\def\R{\mathbb{R}}
\def\p{\partial}
\def\x{\times}
\renewcommand{\o}{\circ}
\def\<{\langle}
\def\>{\rangle}
\let\on=\operatorname
\title[Orbit spaces and leaf spaces of foliations as generalized manifolds]
{Orbit spaces and leaf spaces of foliations as generalized manifolds
}
\author[M.\ V.\ Losik]
{Mark\ V.\ Losik\,$^*$}\thanks{${ }^*$ The paper contains
unpublished results by Mark\,V.\,Losik (27.01.1935--12.05.2013). The
paper is made up of 3 tex-files found on the computer of M.V.\,Losik
by his former student A.\,Galaev, who edited the text, added some
remarks in the form of footnotes and added the abstract.}
\address{M.\ V.\ Losik: Saratov State University, Saratov, Russia}
\email{anton.galaev@uhk.cz (A. Galaev)}
\begin{document}

\begin{abstract}

The paper gives a categorical approach to generalized manifolds such
as orbit spaces and leaf spaces of foliations. It is suggested to
consider these spaces as sets equipped with some additional
structure which generalizes the notion of atlas. The approach is
compared with the known ones that use the Grothendieck topos, the
Haefliger classifying space and the Connes non-commutative geometry.
The main aim of this paper is to indicate that the suggested
approach permits to simplify essentially the modern theory of the
leaf space of foliations and the orbit space of diffeomorphism
groups, and to obtain some new results on the characteristic classes
of foliations. As an application, it is shown that the first Chern
class is non-trivial for the leaf space of the Reeb foliation and
its geometrical meaning is indicated.

\subjclass[2000]{57D30, 57R32, 22A22, 55R40, 57D20}
\end{abstract}



\maketitle

\section{Introduction}
Such objects as leaf spaces of a foliations and orbit spaces of
diffeomorphism groups are subjects of intensive study during last
several decades. There is the vast literature devoted to the
investigation of these spaces. They  have a very complicated
structure, in particular, often their topology is very bad. By this
reason usually such a space is substituted by some model which
'encodes' the information about this space. For example, for the
leaf spaces of foliations there are three approaches to construct
such models \cite{MoModels}, namely, by means of the Grothendieck
topos, the Haefliger classifying space and the Connes
non-commutative geometry. Central to all these approaches is the
construction of a smooth \'etale groupoid $G$. The three approaches
above then become special instances of the general procedure of
associating to a smooth groupoid a classifying topos $Sh(G)$, a
classifying space $BG$, and a convolution algebra $C^\infty_c(G)$.
Thus the study  of the leaf space of a foliation  is included in the
more general theory of smooth groupoids. In the all of these
examples the leaf space of a foliation or the orbit space of a
diffeomorphism group are present only virtually.

In contrast to these approaches we consider the objects above as
sets equipped with some additional structure which generalizes the
notion of atlas. Moreover, we construct a category containing
among its objects both manifolds and all objects above. A typical
example of such an object is an orbifold. Thus, our construction
is based on the idea of the atlas consisting of charts in
contrast, for example, to the non-commutative geometry which is
based on the idea of  non-commutative, in general, algebra of
functions. Although the topology of basic sets of objects for the
category above is  often bad or trivial but the smooth structure
is very rich. First such an approach was proposed in the paper
\cite{L5}. The more general categorical construction which
contains all categories we need next is proposed in the paper
\cite{L6}. Note that the objects of these categories possess the
natural structure of Grothendieck's topology  and, in many cases,
they are defined uniquely by some smooth \'etale groupoid. This
allows us to compare our approach with the known ones.

The  main aim of this paper is to indicate that our approach permits
to simplify essentially the modern theory of the leaf space of
foliations and the orbit space of diffeomorphism groups, and to
obtain some new results on the characteristic classes of foliations.

The characteristic classes of  a foliation $\mathfrak F$ of
codimension $n$ on a smooth manifold $M$ are well known. Moreover,
it is known that they can be defined as cohomology classes on the
spaces of leaves of foliations. Usually, the characteristic
classes above are derived from the relative cohomologies
$H^*(W_n,O(n))$ and $H^*(W_n,\text{GL}(n,\R))$ of the Lie algebra
of formal vector fields with respect to the orthogonal group
$O(n)$ or from the Pontrjagin classes of the principal
$\text{GL}(n,\R)$-bundle associated to the space of leaves of
foliation. All these characteristic classes can be obtained as the
cohomology classes of the \v{C}ech-De Rham cohomology theory
constructed in \cite{Cr}. Although there are explicit formulas for
all these characteristic classes, there is no information about
non-triviality of the characteristic classes above. In the paper
\cite{L5} it was shown that the formal first Chern class of the
cohomology $H^*(W_1,{\rm GL}(1,\R))$ is in general non-trivial in some
specific cohomology theory. One of  the aims of this paper is to
prove that the first Chern class is non-trivial for the leaf space
of the Reeb foliation and to indicate its geometrical meaning.

Unless specified, all manifolds next are finite dimensional
Hausdorff manifolds of $C^\infty$-class. Throughout the paper all
categories are proposed to be small.

\section{$\mathcal C$-spaces}

\subsection{Definition of $\mathcal C$-spaces}\label{secdefCsp}

Let $\mathcal C$ be a category. Next we denote the space of objects
of $\mathcal C$ by $\mathcal C_0$ and the set of morphisms of
$\mathcal C$ by $\mathcal C_1$. Let  $a\in\mathcal C_0$. A set $S$
of $\mathcal C$-morphisms $f:b\to a$ is an $a$-{\bfseries sieve}, if
the composition of an arbitrary $\mathcal C$-morphism $c\to b$ and
$f$ belongs to $S$. The minimal $a$-sieve containing a given set $T$
of $\mathcal C$-morphisms with the target $a$ is called an
$a$-{\bfseries sieve generated by} $T$. The set of $\mathcal
C$-morphisms $c\to b$ whose compositions with $f$ belong to $S$, is
called a {\bfseries restriction} of $S$ to $b$ (with respect to $f$)
and denoted by $f^*S$.

Recall that a {\bfseries Grothendick topology} on a category
$\mathcal C$ is constituted by assigning to each $a\in\mathcal C_0$
the set $\on{Cov}(a)$ of $a$-sieves, called {\bfseries covers}, such
that the following axioms hold:
\begin{enumerate}
 \item For any $a\in\mathcal C_0$, the $a$-sieve generated by identity morphism $1_a$ is a cover;
\item For any $\mathcal C$-morphism $f:b\to a$ and $S\in\on{Cov}(a)$, we have $f^*S\in\on{Cov}(b)$;
\item Let $S\in\on{Cov}(a)$ and let $R$ be an $a$-sieve. Then $R\in\on{Cov}(a)$ if, for any $\mathcal C$-morphism $f:b\to a$ from $S$, we have $f^*R\in\on{Cov}(b)$.
\end{enumerate}
A category $\mathcal C$ with a Grothendieck topology on $\mathcal C$
is called a {\bfseries site}.

Referring  to the paper \cite{L6} for the general exposition we
consider the following five categories.

\begin{itemize}

\item[1] The category $\mathcal M_n$ with objects smooth manifolds of
dimension $n$ and morphisms  local diffeomorphisms (i.e. \'etale
morphisms) of one manifold into another one\footnote{Here a smooth
map $f:M\to N$ between two manifolds is called a local
diffeomorphism if each point of $M$ has an open neighborhood $U$
such that $f$ restricted to $U$ is a diffeomorphism onto~$f(U)$.}.

\item[2] The full subcategory $\mathcal D_n$ of the category $\mathcal
M_n$ with objects open submanifolds of $\mathbb R^n$.

\item[3]  The category $\mathcal M_{\infty}$ with objects smooth
manifolds with model space $\mathbb R^\infty$ and morphisms  local
diffeomorphisms (i.e. \'etale morphisms) of one manifold into
another one. In \cite{BR} it is shown that the usual smooth
technique can be extended naturally to such manifolds.

\item[4] The category $\mathcal M$ of smooth manifolds (of arbitrary finite
dimension) and morphisms smooth maps of one manifold into another
one.

\item[5] Let $G$ be a Lie group.  Consider a category $\mathcal
P_n(G)$ with objects smooth principal $G$-bundles with
$n$-dimensional bases whose morphisms are morphisms of such
principal $G$-bundles which are projected to the \'etale maps of
bases.

\end{itemize}

Let $\mathcal C$ be any of the categories above. For any
$a\in\mathcal C_0$, an $a$-sieve $S$ is called a cover, if $S$
containes the set of inclusions $a_i\to a$, where  $a_i$ is an open
cover of $a$. It is easy to see that these data define a
Grothendieck topology on $\mathcal C$.

Next we apply some general constructions of \cite{L6} to the
category $\mathcal C$.

Denote by $J$ the forgetful functor from $\mathcal C$ to the
category {\itshape Sets} of sets. Let $X$ be a set. A $\mathcal
C$-{\bfseries  chart} on $X$ is a map $k:c\to X$, where
$c\in\mathcal C_0$. Put $D(k)=c$. Given two $\mathcal C$-charts
$k_1$ and $k_2$, {\bfseries a morphism} from $k_1$ to $k_2$ is a
$\mathcal C$-morphism $f:D(k_1)\to D(k_2)$ such that $k_1= k_2\o f$.
Let $\mathcal C(X)$ be a category of $\mathcal C$-charts on $X$.  By
definition, $D:\mathcal C(X)\to\mathcal C$ is a covariant functor.
For a set $\Ph(X)$ of $\mathcal C$-charts on $X$, denote by
$\mathcal C_{\Ph(X)}$ the full subcategory of $\mathcal C(X)$ with
the object set $\Ph(X)$.

\begin{definition}\label{C-atlas}
A set $\Ph(X)$ of $\mathcal C$-charts on $X$ is called a $\mathcal
C$-atlas on $X$, if the set $X$ with the set of maps $k:J\o D(k)\to
X$ ($k\in\Ph(X))$ is an inductive limit $\varinjlim J\o D$ of the
functor $J\o D:\mathcal C_{\Ph(X)}\to Sets$.
\end{definition}

Let $\Ph(X)$ be a $\mathcal C$-atlas on $X$. Evidently the functor
$D:\mathcal C_{\Phi(X)}\to \mathcal C$ maps $\mathcal C_{\Ph(X)}$
onto some subcategory $\mathcal C(\Ph(X))$ of $\mathcal C$ and the
pair $(X,\Ph(X))$ can be considered as an inductive limit of the
restriction of $J$ to $\mathcal C(\Ph(X))$. Conversely, for any
subcategory $\mathcal B$ of $\mathcal C$, the inductive  limit of
the restriction of $J$ to $\mathcal B$ is an $\mathcal C$-atlas on
some set $X$. Thus one can define a $\mathcal C$-atlas as an
inductive limit of the restriction of $J$ to some subcategory of
$\mathcal C$. The set $X$ for this $\mathcal C$-atlas is determined
by this inductive limit up to bijective map. Note that, for every
set $\Ph(X)$ of $\mathcal C$-charts on $X$, by the definition of an
inductive limit, $\varinjlim J\o D$ is a set $\widetilde X$ given
with a set of maps $\widetilde k:J\o D(k)\to \widetilde X$ ($k\in
\Ph(X)$) such that the set of maps $\widetilde k:D(k)\to\tilde X$ is
a $\mathcal C$-atlas on $\widetilde X$ and there is a unique map
$h:\widetilde X\to X$ such that, for every $k\in \Phi(X)$, $k=h\o
\widetilde k$.

Let $\Ph$ be a set of $\mathcal C$-charts on $X$.  We shall say
that, for a $b\in \mathcal C_0$ a $\mathcal C$-chart on $X$ of the
type $k\o D(f)$, where $k\in \Ph$, $c=D(k)$, and $f:b\to c$ is a
morphism of $\mathcal C$, {\bfseries is obtained by restriction of
$k$ to $b$ with respect to} $f$. Denote by $R(\Ph)$ the set of
$\mathcal C$-charts on $X$  obtained by restriction of the charts
from $\Ph$ with respect to the all appropriate morphisms of
$\mathcal C$. Clearly, we have $\Ph\subset R(\Ph)$ and
$R^2(\Ph)=R(\Ph)$.

We shall say that a $\mathcal C$-chart $k$ on $X$ {\bfseries is
obtained by gluing from} $\Ph$, if there are a family $\{k_i\}$ of
$\mathcal C$-charts from $\Ph$ and the set of morphisms
$\{f_i:D(k_i)\to D(k)\}$ generating a cover of $D(k)$, such that
$k_i=k\o f_i$. Denote by $G(\Ph)$ the set of $\mathcal C$-charts on
$X$  obtained by gluing from $\Ph$. Clearly, we have $\Ph\subset
G(\Ph)$ and $G^2(\Ph)=G(\Ph)$. Put $\bar\Ph=G\o R(\Ph)$.

\begin{proposition} (\cite{L6}, Proposition 2.1.2)
The correspondence $\Ph\to \bar\Ph$ is a closure on the set of
$\mathcal M_n$-atlases on $X$, i.e. the following conditions hold:
\begin{enumerate}
\item $\Ph\subset\bar\Ph$;
\item If $\Ph_1\subset\Ph_2$, then $\bar\Ph_1\subset\bar\Ph_2$;
\item $\Bar{\Bar\Ph}=\bar\Ph$.
\end{enumerate}
\end{proposition}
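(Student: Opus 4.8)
The plan is to reduce everything to the three facts already recorded: that $R$ and $G$ are inflationary idempotents (with $\Ph\subset R(\Ph)$, $R^2=R$, $\Ph\subset G(\Ph)$, $G^2=G$), and that $G$ is built from the Grothendieck topology via covers. First I would observe that $R$ and $G$ are \emph{monotone}: if $\Ph_1\subset\Ph_2$, then every chart obtained by restriction (resp.\ by gluing) from $\Ph_1$ is a fortiori obtained by restriction (resp.\ by gluing) from $\Ph_2$, so $R(\Ph_1)\subset R(\Ph_2)$ and $G(\Ph_1)\subset G(\Ph_2)$. Condition (2) then follows at once from $\bar\Ph=G\o R(\Ph)$, and condition (1) follows by chaining the two inflationary inclusions, $\Ph\subset R(\Ph)\subset G(R(\Ph))=\bar\Ph$. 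One should also note that $\bar\Ph$ is again an $\mathcal M_n$-atlas on $X$: passing to a restriction $k\o f$ of a chart $k$ adjoins a chart that factors through $k$, and adjoining a chart glued from a covering family adjoins a chart determined by that family via the universal property, so neither operation changes the inductive limit $\varinjlim J\o D$, whence $R(\Ph)$, $G(\Ph)$ and $\bar\Ph$ still present $X$ as that colimit.

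The heart of the proof is the inclusion $R\o G(\Ph)\subset G\o R(\Ph)$, valid for every set $\Ph$ of $\mathcal C$-charts: a chart obtained by restricting a glued chart can be re-obtained by gluing restricted charts. To prove it I would take $k\in R(G(\Ph))$, say $k=k'\o f$ with $k'\in G(\Ph)$ and $f\colon D(k)\to D(k')$ a $\mathcal C$-morphism, and choose a family $\{k_i\}\subset\Ph$ with morphisms $f_i\colon D(k_i)\to D(k')$ generating a cover $S$ of $D(k')$ and satisfying $k_i=k'\o f_i$. By axiom (2) of a Grothendieck topology the restricted sieve $f^*S$ is a cover of $D(k)$. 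For every $g\colon d\to D(k)$ in $f^*S$ one has $f\o g\in S$, hence $f\o g=f_i\o h$ for some index $i$ and some $\mathcal C$-morphism $h\colon d\to D(k_i)$; therefore $k\o g=k'\o f\o g=k'\o f_i\o h=k_i\o h$, so $k\o g$ is a restriction of $k_i\in\Ph$ and lies in $R(\Ph)$. Thus the family $\{\,k\o g : g\in f^*S\,\}\subset R(\Ph)$, together with the morphisms $g$ (which generate the cover $f^*S$ of $D(k)$) and the equalities $k\o g=k\o g$, exhibits $k$ as glued from $R(\Ph)$, i.e.\ $k\in G(R(\Ph))$.

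Granting this lemma, condition (3) is formal. Applying the lemma with $\Ph$ replaced by $R(\Ph)$ and using $R^2=R$ gives $R\bigl(G(R(\Ph))\bigr)\subset G\bigl(R(R(\Ph))\bigr)=G(R(\Ph))$; applying the monotone operator $G$ and then $G^2=G$ yields
$\bar{\bar\Ph}=G\bigl(R(G(R(\Ph)))\bigr)\subset G\bigl(G(R(\Ph))\bigr)=G(R(\Ph))=\bar\Ph$,
while the reverse inclusion $\bar\Ph\subset\bar{\bar\Ph}$ is just condition (1) applied to $\bar\Ph$. Hence $\bar{\bar\Ph}=\bar\Ph$.

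The only genuinely non-formal point — and the one I expect to require care — is the lemma $R\o G\subset G\o R$, precisely because it is where pullback-stability of covers is used: one must verify that the covering family of the glued chart pulls back along an arbitrary $\mathcal C$-morphism $f$ to a cover of the source, and that the resulting pieces are honest restrictions of charts already in $\Ph$, which is exactly the identity $k\o g=k_i\o h$ above. Everything else is bookkeeping with the idempotence and monotonicity of $R$ and $G$.
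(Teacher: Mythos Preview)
The paper does not actually prove this proposition; it merely cites \cite{L6}, Proposition~2.1.2, and moves on. So there is no in-paper argument to compare against.

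Your proof is correct and is the natural one. Conditions (1) and (2) are immediate from the inflationary and monotone character of $R$ and $G$, and your key lemma $R\circ G\subset G\circ R$ is exactly the right ingredient for (3): it isolates the single place where axiom~(2) of the Grothendieck topology (stability of covers under pullback) is genuinely needed. The verification of the lemma is clean---the identity $k\circ g = k_i\circ h$ shows each piece of the pulled-back cover is a restriction of a chart of $\Ph$---and the deduction of idempotence from $R^2=R$, $G^2=G$, monotonicity, and the lemma is a standard manipulation. Your side remark that $\bar\Ph$ is again an atlas (neither $R$ nor $G$ changes the colimit) is also correct and matches the paper's one-line assertion to that effect.
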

Evidently, for a $\mathcal C$-atlas on $X$, $\bar\Ph$ is a $\mathcal
C$-atlas on $X$. A $\mathcal C$-atlas $\Ph$ on $X$ is called
{\bfseries closed}\footnote{A closed $\mathcal C$-atlas is the same
as a {\bf maximal} $\mathcal C$-atlas. A $\mathcal C$-atlas $\Ph$ is
called {\bf full} if for any $k\in\bar\Ph$ and any $x\in D(x)$,
there exist a chart $k_1\in\Ph$ and a morphism $m:k_1\to k$ such
that $k_1(y)=x$ for some $y\in D(k_1)$. The notion of a full atlas
is similar to the notion from \cite{Cr} of a transversal basis for a
foliation.} if $\Bar\Ph=\Ph$.

\begin{definition} A pair $(X,\Ph)$, where $\Ph$ is a closed $\mathcal C$-atlas on
$X$, is called $\mathcal C$-{\bfseries space}. Two $\mathcal
C$-atlases $\Ph_1$ and $\Ph_2$ on $X$ are called {\bfseries
equivalent} if $\bar\Ph_1=\bar\Ph_2$.\end{definition}

By definition, equivalent $\mathcal C$-atlases on $X$ determine the
same structure of $\mathcal C$-space on $X$. It is evident that two
$\mathcal C$-atlases $\Ph_1$ and $\Ph_2$ on $X$ are equivalent iff
$R(\Ph_1)=R(\Ph_2)$.

\begin{definition}\label{morphism}
Let $(X_1,\Ph_1)$ and $(X_2,\Ph_2)$ be two $\mathcal C$-spaces. A
morphism $(X_1,\Ph_1)\to (X_2,\Ph_2)$ is a map $f:X_1\to X_2$ , if,
for each $k\in\Ph_1$, we have $f\o k\in\Ph_2$.
\end{definition}

Thus, we have a category $\mathcal C_{\on{sp}}$ of $\mathcal
C$-spaces, in particular, we have the categories $\mathcal
M_{n,\on{sp}}$, $\mathcal D_{n,\on{sp}}$, $\mathcal
M_{\infty,\on{sp}}$, $\mathcal M_{\on{sp}}$, and $\mathcal
P(G)_{n,\on{sp}}$.

For $a\in\mathcal C_0$, the identity map $\on{id}:a\to a$ is an
$\mathcal C$-chart on $a$ and $\Ph(a)=\{\on{id}\}$ is a $\mathcal
C$-atlas on $a$. Therefore, one can consider $a$ as an object of the
category $\mathcal C_{\on{sp}}$ and the corresponding map $\mathcal
C\to\mathcal C_{\on{sp}}$ is an isomorphism of the category
$\mathcal C$ onto a full subcategory of the category $\mathcal
C_{\on{sp}}$.

It is evident that each object of the category $\mathcal M_n$ is an
object of the category $\mathcal D_{n,\on{sp}}$. Since the category
$\mathcal D_n$ is a full subcategory of $\mathcal M_n$, the
corresponding natural map from $\mathcal D_{n,\on{sp}}$ into
$\mathcal M_{n,\on{sp}}$ is a category isomorphism (see \cite{L6},
Theorem 2.3.4). Nevertheless, later it is useful to consider both of
these categories. Moreover, each $\mathcal M_n$-atlas $\Ph$ on $X$
is a $\mathcal M$-atlas on $X$ (with the larger set of morphisms of
charts) and equivalent $\mathcal M_n$-atlases on $X$ are equivalent
as $\mathcal M$-atlases on $X$.

It is clear that the definition of a $\mathcal D_n$-space is a
direct generalization of the definition of an $n$-dimensional
orbifold. See also the similar notions of $S$-atlas of Van Est
\cite{VE} and $QF$-variety of Pradines, Wouafo-Kamga \cite{PW}. The
category of $\mathcal D$-spaces coincides with the category of
diffeological spaces introduced by Souriau~\cite{So}.

\subsection{Structures on $\mathcal{M}_n$-spaces}\label{secstruc}

Now we show that an $\mathcal M_n$-space has a rich smooth
structure.

Let $F:\mathcal M_n\to Sets$ be a covariant functor. Define an
extension of $F$ to the category $\mathcal M_{n,\on{sp}}$ as
follows. Let $X$ be a $\mathcal M_n$-space and let $\Ph$ be a
$\mathcal M_n$-atlas on $X$. By definition of inductive limit,
$\varinjlim F\o D$ is a set $\tilde X$ with the set of maps $\tilde
\Ph$: $\tilde k:F\o D(k)\to\tilde X$ ($k\in\Ph$) compatible with the
morphisms of the category $\mathcal C_\Ph$. Put  $F(\Ph)=\tilde X$.
It is easy to check that, if two $\mathcal M_n$-atlases  $\Ph_1$ and
$\Ph_2$ on $X$ are equivalent, there is a natural bijective map
$F(\Ph_1)\to F(\Ph_2)$. We extend thus the functor $F$ to the
category $\mathcal M_{n,\on{sp}}$. It is easy to see that this
extension is compatible with functor morphisms. For example, let
$C(M)$ be the set of smooth singular simplexes on a manifold $M$.
Evidently $C(M)$ is a covariant functor $\mathcal M_n\to Sets$ which
has a natural extension to the category $\mathcal M_{n,\on{sp}}$.
Then we have natural notion of the complex of finite smooth singular
chains on each $\mathcal M_n$-space.

Let $F:\mathcal M_n\to\mathcal M_N$ ($F:\mathcal M_n\to\mathcal
M_\infty$) be a covariant functor and let $\Ph$ be a $\mathcal
M_n$-atlas on $X$. Put $\tilde F=J\o F$ and $\tilde X=\varinjlim
\tilde F\o D$. Then the set of maps $\tilde \Ph$: $\tilde k:\tilde
F\o D(k)\to\tilde X$ ($k\in\Ph$) is an $\mathcal M_N$-atlas
($\mathcal M_\infty$-atlas) on the set $\tilde X=F(\Ph)$.  We obtain
thus the covariant functor $F:\mathcal M_{n,\on{sp}}\to \mathcal
M_{N,\on{sp}}$ ($F:\mathcal M_{n,\on{sp}}\to \mathcal
M_{\infty,\on{sp}}$) which is an extension of the initial functor
and is compatible with functor morphisms. We have thus the notions
of the tangent bundle, the frame bundle, and so on for any $\mathcal
M_n$-space.

For example, put $F(M)=\on{T}(M)$ or $F(M)=\on{Fr}(M)$, where
$\on{T}(M)$ is a tangent bundle of $M$ and $\on{Fr}(M)$ is the frame
bundle of $M$. Applying the above extension to these functors we
obtain the tangent bundle for any $\mathcal M$-space and frame
bundle  for any $\mathcal M_n$-space. Moreover, the projection
$\on{T}(M)\to M$ ($\on{Fr}(M)\to M$) is functor morphism from the
category $\mathcal M_{\on{sp}}$ to the category $\mathcal
M_{\on{sp}}$ (from the category of $\mathcal M_{n,\on{sp}}$ to
$\mathcal M_{n(n+1),\on{sp}}$). Then we have a projection of the
tangent bundle of a $\mathcal M$-space $X$ (the frame bundle of a
$\mathcal M_n$-space) to this  $\mathcal M$-space). Note that the
fibers of the projection $T(X)\to X$ are not vector spaces in
general. By definition, a tangent vector $\ta\in T(X,\Ph)$ is a
family of tangent vectors $\ta_k$ to $M$ for each $k\in\Ph$ such
that these vectors are compatible with the morphisms of $\mathcal
C_{\Ph}$.

Assume that a category $\mathcal C$ has products and the kernel of
each pair of morphisms exists. For example, $\mathcal C$ is a
category of sets, groups, rings or algebras. Let $F:\mathcal
M_n\to\mathcal C$ be a contravariant functor.  Define an extension
of $F$ to the category of $\mathcal M_n$-spaces as follows. Let
$\Ph$ be a $\mathcal M_n$-atlas on $X$. By the definition of
projective limit, is an object $\hat X$ of the category $\mathcal C$
with the set of morphisms $\hat\Ph$: $\hat k:\hat X\to F\o D(k)$
($k\in\Ph$) compatible with the morphisms of the category $\mathcal
C_\Ph$. It is known that the projective limit $\varprojlim F\o D$
exists. It is easy to see that, if two $\mathcal M_n$-atlases
$\Ph_1$ and $\Ph_2$ on $X$ are equivalent, there is a natural
isomorphism $\hat\Ph_1\to \hat\Ph_2$. Put $F(\Ph)=\hat\Ph$. It is
evident that $F$ is an extension of the initial functor to the
category $\mathcal M_{n,\on{sp}}$  which is compatible with functor
morphisms.  If $F(M)$ is a sheaf on each $n$-dimensional manifold
$M$, $F(X,\Ph)$ is called {\bfseries a sheaf on a $\mathcal
M_n$-space} $(X,\Ph)$. The examples of sheaves are the tensor
algebra $\Theta(M)$ of tensor fields, the de Rham complex $\Om^*(M)$
on a manifold $M$, and so on.

Put $F(M)=\Om^*(M)$, where $\Om^*(M)$ is the graded differential
algebra of differential forms on $M$. Then  $\Om^*(X)$ is a graded
differential algebra for any $\mathcal M_n$-space $X$. By
definition, a $p$-differential form on $X$ is a family of
$p$-differential forms $\om(D(k))$ on $D(k)$ for each $k\in\Ph)$
such that these forms are compatible with the morphisms of $\mathcal
C_{\Ph}$. It is easy to see that the notion of integral of a
differential $p$-form over a smooth singular $p$-chain is extended
naturally to the category $\mathcal M_{n,\on{sp}}$ and the
corresponding Stokes theorem is true.  Note that in the foliation
theory the corresponding  differential algebra $\Om^*(X)$ is called
the algebra of basic differential forms and the cohomology $H^*(X)$
of $\Om^*(X)$ is called the basic cohomology of foliation.

Thus, we see that the elements of the smooth techniques on smooth
manifolds which have a functorial nature could be extended to the
category $\mathcal M_{n,\on{sp}}$.

\begin{remark*}
To extend the functors above it is possible to use, instead of a
maximal $\mathcal M_n$-atlas on the set $X$, any full $\mathcal
M_n$-atlas on the set $X$. The result will be the same. For example,
one can use a maximal $\mathcal D_n$-atlas on $X$.
\end{remark*}

\subsection{Groupoids}

Recall the definition of smooth and \'etale groupoids.

{\bfseries  A groupoid}  is a category $\mathcal G$ all whose
morphisms  are isomorphisms. Thus, we have a set $\mathcal{G}_0$ of
objects $x,y,...$ and the set $\mathcal{G}_1$ of morphisms
$f,g,...$. Each morphism $f:x\to y$ has a source $s(f)=x$ and a
target $t(f)=y$. Two morphisms $f$ and $g$ with $s(f)=t(g)$ can be
composed as $fg:s(g)\to t(f)$. This composition is associative, has
a unit $1_x:x\to x$ for each $x\in\mathcal{G}_0$, and has an inverse
$i(f)=f^{-1}:t(f)\to s(f)$ for each $f\in\mathcal{G}_1$. Put
$$\mathcal{G}_2=\mathcal{G}_1\times_{\mathcal{G}_0}\mathcal{G}_1=\{(f,g)\in
\mathcal{G}_1\x\mathcal{G}_1: s(f)=t(g)\}.$$ All the structure is
contained in the diagram
\begin{equation}\label{groupoid}
\mathcal{G}_2\stackrel{m}{\longrightarrow}\mathcal{G}_1\stackrel{i}{\longrightarrow}
\mathcal{G}_1\stackbin[t]{s}{\rightrightarrows}\mathcal{G}_0\stackrel{u}{\longrightarrow}\mathcal{G}_1.
\end{equation}
Here $s$ and $t$ are the source and the target, $m$ denotes
composition, $i$ is the  inverse, and $u(x)=1_x$. The groupoid is
{\bfseries smooth} if $\mathcal{G}_0$ and $\mathcal{G}_1$ are smooth
manifolds ($\mathcal{G}_1$ is non-Hausdorff, in general), all the
structure maps in \eqref{groupoid} are smooth,  and $s$ and $t$ are
submersions, so that $\mathcal{G}_2$ is a smooth manifold as well. A
smooth groupoid is {\bfseries \'etale} if all the structure maps in
\eqref{groupoid} are \'etale maps (it suffices to require this
for~$s$).

Let $\mathcal C$ be a category. Define an equivalence relation $x
\sim y$ on $\mathcal C_0$ generated by the relations of the
following type: there is a morphism $f$ of the category $\mathcal C$
such that either $f:x\to y$, or $f:y\to x$. Thus, the quotient set
$\mathcal C_0/\sim$ equals $\pi_0(\mathcal C)$, the set of connected
components of $\mathcal C$. It is evident that, if $\mathcal
C=\mathcal G$ is a groupoid, $x\sim y$ iff there is a morphism
$f:x\to y$ of the groupoid $\mathcal G$.

Let $\mathcal C=\mathcal M_n, \mathcal D_n, \mathcal M$. For each
$\mathcal C$-atlas $\Ph$ on $X$, we define a category $\mathcal
G(\Ph)$ as follows. The object space $\mathcal G(\Ph)_0$ is the
disjoint union of the domains $D(k)$ of all $\mathcal C$-charts
$k\in\Ph$. For any $x_1,x_2\in\mathcal G(\Ph)_0$, a morphism $x_1\to
x_2$ is the germ of a morphism $f:k_1\to k_2$ at $x_1$, where $k_1,
k_2\in\Ph$, $x_1\in D(k_1)$, $x_2\in D(k_2)$ and $f(x_1)=x_2$.  It
is clear that the natural map $\mathcal G(\Ph)_0\to X$ induces a
bijective map $(\mathcal G(\Ph)/\sim)\to X$. Note that, by
definition, $\mathcal G(\Ph)_0$ is an $n$-dimensional manifold for
$\mathcal C=\mathcal M_n,\mathcal D_n$ and a disjoint union of
manifolds for $\mathcal C=\mathcal M$.

Conversely, consider the category $\mathcal G(\Ph)$ and the
projection $\pi:\mathcal G(\Ph)_0\to X=\mathcal G(\Ph)/\sim$. Let
$\Ph'$ be the set of restrictions of $\pi$ to all open subsets of
$\mathcal G(\Ph)_0$. It is easy to check that $\Ph'$ is a $\mathcal
C$-atlas on $X$,  $\Ph\subset\Ph'$, and $\mathcal C$-atlases $\Ph$
and $\Ph'$ are equivalent.

A $\mathcal C$-atlas $\Ph$ on $X$ is called {\bfseries locally
invertible} if the category $\mathcal G(\Ph)$ is a groupoid, i.e.,
if for any $k_1,k_2\in\Ph$, each $\mathcal C_\Ph$-morphism $f:k_1\to
k_2$, and each $x\in D(k_1)$, there are $k_3\in\Ph$ and $\mathcal
C_\Ph$-morphisms  $i:k_3\to k_2$, $g:k_3\to k_1$ such that $f(x)\in
i(D(k_3))$, $fg=i$, and $i$ maps $D(k_3)$ diffeomorphically onto an
open subset of $D(k_2)$. In particular, an $\mathcal M_n$-atlas
$\Ph$ on $X$ is locally invertible if, for each $k\in\Ph$ and each
open subset $U$ of $D(k)$, we have $(U,k|_U)\in\Ph$.

Let $\Ph$ be a locally invertible $\mathcal M_n$-atlas on $X$. The
set $\mathcal G_0(\Ph)$  is a manifold by definition. Endow the set
of morphisms $\mathcal G_1(\Ph(X))$ with  a topology generated by
sets of germs of morphisms $f:k_1\to k_2$ of the category $C_\Ph$ at
points $x\in D(k_1)$. Note that the manifold $\mathcal G_1(\Ph)$ may
be non-Hausdorff. Since the sourse map $s:\mathcal
G(\Ph)_1\to\mathcal G(\Ph)_0$ is \'etale, $\mathcal G(\Ph)$ is an
\'etale groupoid.

If $\mathcal G$ is an \'etale groupoid, each morphism $g:x\to y$ in
$\mathcal G$ uniquely determines the germ of a diffeomorphism
$\tilde g:(x,\mathcal G_0)\to(y,\mathcal G_0)$, namely $\tilde g$ is
a germ at x of the local diffeomorphism $t\o\si$ of $\mathcal G_0$,
where $\si$ is a section of $s:\mathcal G_1\to\mathcal G_0$ on a
neighborhood $U$ of $x$ , with $\si(x)=y$ and $U$ is so small that
$t\o \si$ is a diffeomorphism from $U$ onto its image. In
particular, this construction gives a group homomorphism $\mathcal
G_x\to\on{Diff}_x(\mathcal G_0)$. The \'etale groupoid is called
{\bfseries effective} (or $S$-atlas in sence of van Est \cite{VE})
whenever this homomorphism is injective for each $x\in\mathcal G_0$.
By definition, the groupoid $\mathcal G(\Ph)$ is effective.

\subsection{Examples of $\mathcal{M}_n$-spaces}

Now we give examples of $\mathcal M_n$-spaces.

1. Let $N$ be an $(m+n)$-dimensional manifold and let $\mathcal F$
be a foliation  of codimension $n$ on $N$. Denote by $L(\mathcal F)$
the set of leaves of $\mathcal F$ and by $\Ph(\mathcal F)$ the set
of smooth maps $M\to N$ transverse to the leaves of $\mathcal F$,
where $M\in (\mathcal M_n)_0$.

\begin{theorem}(\cite{L5},\cite{L6})
The set $\Ph(\mathcal F)$ of $\mathcal M_n$-charts on  $L(\mathcal
F)$  is a locally invertible $\mathcal M_n$-atlas on $L(\mathcal
F)$.
\end{theorem}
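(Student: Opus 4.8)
The plan is to verify directly that the family $\Ph(\mathcal F)$ of maps $k\colon M\to L(\mathcal F)$, where $M$ is an $n$-dimensional manifold and the underlying smooth map $M\to N$ is transverse to $\mathcal F$, together with the natural morphisms between such charts, satisfies Definition~\ref{E-atlas}. The key geometric input is the local structure of a foliation: around each point of $N$ there is a foliated chart $\phi\colon \mathbf R^m\x\mathbf R^n\to N$ in which the plaques of $\mathcal F$ are the slices $\mathbf R^m\x\{y\}$, so that the second projection $q\colon \mathbf R^m\x\mathbf R^n\to\mathbf R^n$ is a submersion whose fibres are exactly the plaques. This gives, for every sufficiently small transverse map $M\to N$ landing in such a chart, a local factorization of $M\to N\to L(\mathcal F)$ through the $n$-dimensional transversal $\mathbf R^n$, on which the holonomy pseudogroup $\Ga_{\mathcal F}$ acts.

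First I would set up the morphisms: given $k_1,k_2\in\Ph(\mathcal F)$ underlain by transverse maps $\mu_i\colon M_i\to N$, a morphism $k_1\to k_2$ in $\mathcal E_{\Ph(\mathcal F)}$ should be a local diffeomorphism $g\colon M_1\to M_2$ for which $\mu_2\o g$ and $\mu_1$ send each point of $M_1$ to the same leaf, i.e. $\mu_2\o g(x)$ lies on the plaque through $\mu_1(x)$ for all $x$; equivalently, in foliated charts, reading $\mu_i$ through transversals $\tau_i\colon M_i\dashrightarrow\mathbf R^n$, the induced map on transversals is (the germ of) an element of the holonomy pseudogroup. One checks these compose and that the functor $\Pi$ to $\mathcal B_{\Ph(\mathcal F)}$ (forgetting down to the underlying local diffeomorphism of domains) is full, which is essentially a definition-chase once the morphism set is pinned down correctly.

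Second, and this is the heart of the matter, I would prove that $(L(\mathcal F),\{k\colon J\o D(k)\to L(\mathcal F)\})$ is the inductive limit $\varinjlim J\o D\o\Pi$ in \emph{Sets}. Surjectivity of the cocone is immediate since each leaf meets some foliated chart, so the identity transversal $\mathbf R^n\hookrightarrow N$ already hits every leaf. The real work is the amalgamation property: if $x_1\in M_1$, $x_2\in M_2$ with $k_1(x_1)=k_2(x_2)$, i.e. $\mu_1(x_1)$ and $\mu_2(x_2)$ lie on the same leaf $L$, I must produce a chart $k_3$ and morphisms $g_i\colon (M_3,z)\to(M_i,x_i)$ in $\mathcal E_{\Ph(\mathcal F)}$ with $g_i(z)=x_i$ identifying the two germs of points in the colimit. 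Since $x_1$ and $x_2$ map to the same leaf, a path in $L$ from $\mu_1(x_1)$ to $\mu_2(x_2)$ defines a holonomy germ from a transversal near $\mu_1(x_1)$ to one near $\mu_2(x_2)$; pulling back along the transverse maps $\mu_i$ and taking $M_3$ to be a small neighbourhood of $x_1$ (with a suitable submersion to the common local transversal), the holonomy germ is realized by an honest local diffeomorphism $M_3\to M_2$, while the inclusion gives $M_3\to M_1$. Checking that both composites with $k_1,k_2$ agree as set maps, and that the two arrows are genuine $\mathcal E_{\Ph(\mathcal F)}$-morphisms, then gives the universal property, since any other cocone on $J\o D\o\Pi$ factors through $L(\mathcal F)$ by sending a point to its leaf and this factorization is forced to be unique by surjectivity of the charts.

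Finally, local invertibility is built into the construction: for a morphism $g\colon k_1\to k_2$ realizing a holonomy germ and a point $x\in D(k_1)$, the inverse holonomy germ at $g(x)$ is again realized by a local diffeomorphism on a small enough neighbourhood, furnishing $k_3$ and $g'\colon k_3\to k_1$ with $g'$ inverse to $g$ near the chosen point, exactly as in the pseudogroup example $\mathcal E_{\Ph(\Ga)}$ discussed after Definition~\ref{E-atlas}. The main obstacle I anticipate is bookkeeping in the amalgamation step: one has to pass cleanly between the transverse maps $\mu_i\colon M_i\to N$, the foliated charts on $N$, the transversals, and the holonomy pseudogroup on them, shrinking domains repeatedly, while keeping track of basepoints so that the produced morphisms are local diffeomorphisms (not merely germs) on a common domain $M_3$; the non-Hausdorff and leaf-dimension issues are harmless here since all charts are taken small. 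Everything else — fullness of $\Pi$, compatibility of the cocone with morphisms, uniqueness of the comparison map — reduces to routine verification.
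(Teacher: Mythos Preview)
The paper does not prove this theorem in the text; it is stated with a bare citation to \cite{L5},\cite{L6}, so there is no in-paper argument to compare against. Your direct verification from Definitions~\ref{C-atlas} and~\ref{E-atlas} is the natural route and is, in outline, what those references do: use foliated charts to produce local transversals, realize the amalgamation step via holonomy along a leaf-path, and obtain local invertibility from the invertibility of holonomy germs.

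Two small points of cleanup. First, since $\Ph(\mathcal F)$ is an $\mathcal M_n$-atlas in the sense of Definition~\ref{C-atlas}, the associated \'etale structure is \emph{effective}: $\mathcal E_{\Ph(\mathcal F)}=\mathcal C_{\Ph(\mathcal F)}$ and $\Pi$ is the identity functor. So the ``fullness of $\Pi$'' step is vacuous and can be dropped; the morphism set is not something you choose, it is forced by the condition $k_1=k_2\o g$. Second, that condition says $\mu_1(x)$ and $\mu_2(g(x))$ lie on the same \emph{leaf}, not the same plaque; your ``i.e.'' conflates the two. This does not damage the argument---the holonomy diffeomorphism you build is leaf-preserving, hence a genuine morphism of charts---but the equivalence you assert in passing is false in general. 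With those adjustments your sketch is correct.
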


Let $T$ be a complete transversal of the foliation $\mathcal F$,
i.e. an embedded $n$-submanifold $M\subset N$ which transverse to
the leaves of $\mathcal F$ hitting each leaf at least once. Let
$p:T\to L(\mathcal F)$ be the projection. It is easy to check that
the set $\Ph(T)$ of the restrictions of $p$ to open submanifolds of
$T$ is a locally invertible $\mathcal M_n$-atlas on $L(\mathcal F)$
and $\Ph(T)$ is equivalent to the $\mathcal M_n$-atlas $\Ph(\mathcal
F)$. By definition, the set of objects for the  groupoid $\mathcal
G(\Ph(T))$ equals $T$. Evidently, the  groupoid $\mathcal G(\Ph(T))$
coincides with the so-called \'etale model $\on{Hol}_T(N,\mathcal
F)$ of the holonomy groupoid $\on{Hol}(N,\mathcal F)$ of the
foliated manifold $(N,\mathcal F)$ (see, for example, \cite{Mo}).

\begin{remark}
For study the the leaf space of a foliated manifold $(N,\mathcal F)$
one usually uses the holonomy groupoid of $\mathcal F$ with $N$ as
the set of objects. From the point of view accepted in this paper
this is not natural since different foliated manifolds may have
isomorphic leaf spaces.
\end{remark}

2. Let $N$ be a manifold of dimension $n$ and let $\Ga$ be a
pseudogroup of local diffeomorphisms\footnote{Here a local
diffeomorphism of a manifold $N$ is a smooth map $f:U\to N$ defined
on an open subset $U\subset N$ that maps $U$ diffeomorphically onto
$f(U)$. Compare this notion of a local diffeomorphism with the one
from Section \ref{secdefCsp}} of $N$, in particular, a group of
diffeomorphisms of $N$. Let $N/\Ga$ be the orbit space of $\Ga$ and
let $p:N\to N/\Ga$ be the natural projection. Consider the set
$\Ph(\Ga)$ of $\mathcal M_n$-charts on $N/\Ga$ of the type
$(M',p|_{M'})$, where $M'$ is  an open subset of $N$.

\begin{theorem}\label{pseudogroup}(\cite{L5},\cite{L6})
The set $\Ph(\Ga)$ of $\mathcal M_n$-charts on $N/\Ga$ is a locally
invertible $\mathcal M_n$-atlas on $N/\Ga$.
\end{theorem}

3. Let $X$ be a one point set $\{pt\}$ and let the set of $\mathcal
M_n$-charts be the set $\Ph(\{pt\})$ of all maps $M\to\{pt\}$, where
$M\in {\mathcal M_n}_0$. It is evident that the pair
$(\{pt\},\Ph(\{pt\})$ is a final object of the category $\mathcal
M_{n,\on{sp}}$.

4. Consider the space $\mathbb R^n$ and the pseudogroup $\Ga(\mathbf
R^n)$ of all local diffeomorphisms of $\mathbb R^n$. It is clear
that $\mathbb R^n/\Ga(\mathbb R^n)$ is an one point set $\{pt\}$
and, by Theorem~\ref{pseudogroup}, the set $\Ph(\Ga(\mathbb R^n))$
of $\mathcal M_n$-charts on $\{pt\}$, is a locally invertible
$\mathcal M_n$-atlas on $\{pt\}$. Then the corresponding $\mathcal
M_n$-space is a final object of the category of $\mathcal M_n$
spaces. By definition, the groupoid corresponding to
$\Ph(\Ga(\mathbb R^n))$ is the classifying Haefliger groupoid
$\Ga_n$ \cite{Hf1}. To get an equivalent classifying space one can
take instead of $\mathbb R^n$ an arbitrary connected $n$-dimensional
manifold $M$  and its pseudogroup $\Ga(M)$ of all local
diffeomorphisms, this is obvious, since $M /\Ga(M)$ is again a
one-point set.

\section{\'Etale structures}

Consider the following generalizations of an $\mathcal M_n$-space.

\begin{definition}\label{E-atlas}
Let $\Ph$ be an $\mathcal M_n$-atlas on a set $X$ and let $\mathcal
B_\Ph$ be a subcategory of the category $\mathcal C_\Ph$ with the
same set of objects $\Ph$. An \'etale structure on $X$ is a category
$\mathcal E_\Ph$ such that $(\mathcal E_\Ph)_0=\Ph$ given with a
full covariant functor $\Pi:\mathcal E_\Ph\to\mathcal B_\Ph$ which
is identical on the set of objects satisfying the following
condition: the set of maps $k:J\o D(k)\to X$ ($k\in\Ph)$ is an
inductive limit $\varinjlim J\o D\o\Pi$ of the functor $J\o D\o\Pi:
\mathcal E_\Ph\to Sets$. An \'etale structure $\mathcal E_\Ph$ on
$X$ is called effective if $\mathcal E_\Ph=\mathcal B_\Ph$. An
\'etale structure $\mathcal E_\Ph$ on $X$ is called locally
invertible if for any $k_1,k_2\in\Ph$, each $\mathcal
E_\Ph$-morphism $f:k_1\to k_2$, and each $x\in D(k_1)$, there are
$k_3\in\Ph$ and $\mathcal E_\Ph$-morphisms  $i:k_3\to k_2$,
$g:k_3\to k_1$ such that $f(x)\in i(D(k_3))$, $fg=i$, and $i$ maps
$D(k_3)$ diffeomorphically onto an open subset of $D(k_2)$.
\end{definition}

It is clear that each \'etale structure $\mathcal E_\Ph$ on $X$
defines uniquely the corresponding structure of $\mathcal M_n$-space
on $X$ and, therefore, the corresponding structure of diffeological
space.

For example, let $\Ga$ be a pseudogroup of local diffeomorphisms of
an $n$-dimensional manifold $M$, $\Ph(\Ga)$ consists of the
restrictions of the projection $M\to M/\Ga=X$ to the domains of the
local diffeomorphisms from $\Ga$, and morphisms of $\mathcal
E_{\Ph(\Ga)}$  are local diffeomorphisms from $\Ga$. By definition,
$\mathcal E_{\Ph(\Ga)}$ is an effective  locally invertible \'etale
structure on $X$.

Let $\mathcal E_{\Ph}$ be an effective  \'etale structure on $X$
with the set of objects $\Ph$. For each $M\in\mathcal M_0$,
$k\in\Ph$, and a smooth map $f:M\to D(k)$, the composition $k\o f$
is an $\mathcal M$-chart on $X$.

\begin{definition}\label{dover}
 A diffeological structure on the set $X$ over an effective  inverible \'etale structure $\mathcal E_{\Ph}$ on $X$  is a category
$\mathcal E_{\Ph,d}$ whose objects are the pairs $(k,f)$, where
$k\in\Ph$ and  $f:M\to D(k)$ is a smooth map for some $M\in\mathcal
M_0$, and, for any objects $(k_1,f_1)$, $(k_2,f_2)$, a morphism
$(k_1,f_1)\to (k_2,f_2)$ is defined whenever $f_1$ and $f_2$ have
the same domain $M$ and in this case is a morphism $g:k_1\to k_2$
belonging to $(\mathcal E_{\Ph})_1$ such that $f_2=g\o f_1$.
\end{definition}

We generalize definition \ref{morphism} as follows.
\begin{definition}\label{morphism1}
Let $\mathcal E_{\Ph_1}$ and $\mathcal E_{\Ph_2}$ be two \'etale
structures on the sets $X_1$ and $X_2$ respectively. A morphism of
$f:\mathcal E_{\Ph_1}\to\mathcal E_{\Ph_2}$ of \'etale structures is
a  map $f:X_1\to X_2$ that satisfy following conditions:
\begin{enumerate}
\item  For each $k\in\Ph_1$, we have $f\o k\in\Ph_2$;
\item For each morphism $m:k_1\to k_2$ from $\mathcal E_{\Ph_1}$, the map $m:D(f\o k_1)=D(k_1)\to D(k_2)=D(f\o k_2)$ is a morphism
$f\o k_1\to f\o k_2$ from $\mathcal E_{\Ph_2}$.
\end{enumerate}
\end{definition}

Let $\mathcal E_\Ph$ be an \'etale structure on a set $X$. Define a
category $\mathcal G(\mathcal E_\Ph)$ as follows. The set of objects
$\mathcal G(\mathcal E_\Ph)_0$  equals the disjoint union of domains
of $\mathcal M_n$-charts from $\Ph$. For $x,y\in\mathcal G(\mathcal
E_\Ph)_0$ and $k_1,k_2\in\Ph$,  a morphism $x\to y$ is a germ at $x$
of a $\mathcal E_\Ph$-morphism $g:k_1\to k_2$ such that $x\in
D(k_1)$, $y\in D(k_2)$, and $D(g)(x)=y$. If an \'etale structure
$\mathcal E_\Ph$ on $X$ is locally invertible, the category
$\mathcal G(\mathcal E_\Ph)$ is a groupoid.

By definition, the set of objects $\mathcal G(\mathcal E_\Ph)_0$ of
$\mathcal G(\mathcal E_\Ph)$ is an $n$-dimensional manifold. Endow
the set of morphisms $\mathcal G(\mathcal E_\Ph)_1$ of $\mathcal
G(\mathcal E_\Ph)$ with  a topology generated by the sets of germs
of morphisms $f:k_1\to k_2$ of the category $\mathcal E_\Ph$ at
points $x\in D(k_1)$. Since such sets are $n$-dimensional manifolds,
$\mathcal G(\mathcal E_\Ph)_1$ is an $n$-dimensional manifold as
well. By construction, for a locally invertible \'etale structure
$\mathcal E_\Ph$ the category $\mathcal G(\mathcal E_\Ph)$ is an
\'etale groupoid.

\begin{remark}\label{reduced}
Note that, in the construction above, the category $\mathcal
G(\mathcal E_\Ph)$   very often can be reduced essentially. Assume
that we have a surjective  \'etale map $\mathcal G(\mathcal
E_\Ph)_0\to M$, where $M$ is an $n$-dimensional manifold. Then one
can take for  $\mathcal G(\mathcal E_\Ph)_0$ the manifold $M$ and,
for $x,y\in M$, take for a $\mathcal E$-morphism $x\to y$ the germ
of local diffeomorphism $(M,x)\to (M,y)$  which can be lifted to a
germ of some $\mathcal G(\mathcal E_\Ph)$-morphism $k_1\to k_2$. By
definition, the reduced category has the same quotient space
$X=\mathcal G(\Ph)/\sim$. For example, let $\Ga$ be a pseudogroup of
local diffeomorphisms of an $n$-dimensional manifold $M$ and let
$\mathcal E_{\Ph(\Ga)}$ be the corresponding locally invertible
\'etale structure on $X=M/\Ga$. By the standard procedure, the set
of objects $\mathcal G(\mathcal E_{\Ph(\Ga)})_0$ is the disjoint
union of domains of local diffeomorphisms from $\Ga$. But there is
the natural \'etale projection $\mathcal G(\mathcal
E_{\Ph(\Ga)})_0\to M$. Therefore, one can take for the object set of
the reduced category $\mathcal G(\mathcal E_{\Ph(\Ga)})$ the
manifold $M$.
\end{remark}

Let $\mathcal G$ be a smooth groupoid and $X=\mathcal G_0/\sim$.
Since $s$ is a submersion, for each morphism $g:x\to y$ from
$\mathcal G_1$, there is an open neighborhood $U$ of $x\in\mathcal
G_0$ and a smooth section $\si:U\to\mathcal G_1$ of the map $s$ such
that $\si(x)=g$. Put $f_\si=t\o\si:U\to\mathcal G_0$.

Define a category $\mathcal E(\mathcal G)$ as follows. The objects
of this category are open subsets of $\mathcal G_0$.  For
$U,V\in\mathcal E(\mathcal G)_0$, a morphism from $U$ to $V$ in the
category $\mathcal E(\mathcal G)$ is a section $\si:U\to\mathcal
G_1$ of the source map $s$ with the properties that $f_\si$ is an
\'etale map and $f_\si(U)\subset V$. The composition of morphisms
$\si:U\to\mathcal G_1$ and $\ta:V\to\mathcal G_1$, where
$f_\si(U)\subset V$, is defined by
$(\ta\si)(x)=\ta(f_\si(x))\cdot\si(x)$ ($x\in U$), where the
multiplication is taken in $\mathcal G$. Note that the category
$\mathcal E(\mathcal G)$ is similar to the category
$\on{Emb}(\mathcal G)$ defined in \cite{Mo1}.

Let $p:\mathcal G_0\to X$ be the projection. For each open subset
$U$ of $\mathcal G_0$, denote by $k_U$ the restriction of $p$ to
$U$. By definition, $k_U$ is a $\mathcal M_n$-chart on $X$. For two
open subsets $U,V$ of $\mathcal G_0$ and the section $\si$ of the
source map $s$ such that $f_\si$ is \'etale and $f_\si(U)\subset V$,
one can consider $f_\si$ as a morphism $k_U\to k_V$ of $\mathcal
M_n$-charts on $X$. Denote by $\Ph=\Ph(\mathcal G)$ the set of
$\mathcal M_n$-charts on $X$ of type $k_U$. Consider also the
category $\mathcal B(\mathcal G)$ with $\Ph$ as the set of objects
whose morphisms are the \'etale maps $f_\si:U\to V$ as above. We
have the natural covariant functor $\Pi:\mathcal E(\mathcal
G)\to\mathcal B(\mathcal G)$ such that, for an open subset
$U\subset\mathcal G_0$, we have  $\Pi(U)=U$ and, for a section $\si$
as above, we have $\Pi(\si)=f_\si:k_U\to k_V$. By definition, the
functor $\Pi$ is full.

Consider the covariant functor $J\o D\o\Pi:\mathcal E(\mathcal G)\to
Sets$. Prove that the set of maps $k_U:J\o D(k_U):U\to X$ is an
inductive limit $\varinjlim J\o D$ of the functor $J\o D\o\Pi$. It
is obvious that the codomains of the $\mathcal M_n$-charts $k_U$
cover $X$. Let $U,V\in\mathcal E(\mathcal G)_0$, $x\in U$, $y\in V$,
and $k_U(x)=k_V(y)$. Since $X=\mathcal G_0/\sim$, there is a
$\mathcal G$-morphism $g:x\to y$. First assume that $x=y$ and
$U\subset V$. Consider the section $\ve:U\to\mathcal G_1$ defined by
$\ve(x')=1_{x'}$ for any $x'\in U$. Then $\ve$ is a morphism $k_U\to
k_V$ such that $f_\ve(x)=y$ and we are done. In the general case
there exists\footnote{The section $\si$ exists by the following
reason. Since both maps $ds$ and $dt$ are epimorphisms, there exists
a vector subspace $L\subset T_g {\mathcal G} _1$ such that $ds: L\to
T_x{\mathcal G} _0$ and $dt:L\to T_y {\mathcal G} _0$ are
isomorphisms. It is possible to chose a submanifold $\Sigma \subset
{\mathcal G}_1$   passing through the point $g$ such that
$T_g\Sigma=L$ and  $s|_\Sigma,t|_\Sigma :\Sigma \to {\mathcal G}_0$
are diffeomorphisms onto their images. Then $f=(s|_\Sigma)^{-1}$ is
the required section. This remark is communicated to A.\,Galaev  by
Alexei\,Kotov.} a smooth section $\si:U\to\mathcal G_1$ of the
source map $s$ such that $f_\si$ is \'etale and $\si(x)=g$ (by the
argument above one can assume that $f_\si(U)\subset V$). Then for
$f_\si$ as a morphism $k_U\to k_V$, we have $f_\si(x)=y$ and we are
done again. Thus, $\mathcal E(\mathcal G)$ is a locally invertible
\'etale structure on $X$.

Let $\mathcal G$ be a smooth groupoid and let $\mathcal E(\mathcal
G)$ be the corresponding locally invertible \'etale structure on
$X=\mathcal G/\sim$. Consider the groupoid $\mathcal G(\mathcal
E(\mathcal G))$. By definition we have an obvious \'etale map
$\mathcal G(\mathcal E(\mathcal G))_0\to\mathcal G_0$. Then, by
remark \ref{reduced}, one can consider the corresponding reduced
groupoid $\mathcal G(\mathcal E(\mathcal G))$ with $\mathcal G_0$ as
the set of objects.

By construction, for any \'etale groupoid $\mathcal G$, we have
$\mathcal G(\mathcal E(\mathcal G))=\mathcal G$. Thus, to study an
\'etale groupoid $\mathcal G$ one can equivalently use the
corresponding \'etale structure~$\mathcal E_{\mathcal G,\Ph_S}$.

\section{Morita equivalence}

Let $\mathcal G$ and $\mathcal H$ be two smooth groupoids. A
covariant functor $F:\mathcal G\to\mathcal H$ is smooth if the
corresponding maps $\mathcal G_0\to\mathcal H_0$ and $\mathcal
G_1\to\mathcal H_1$ are smooth. It is clear that the morphism
$\mathcal E_{\Ph_1}\to\mathcal E_{\Ph_2}$ of locally invertible
\'etale structures on the sets $X_1$ and $X_2$ respectively induces
a smooth covariant functor $\mathcal G(\mathcal
E_{\Ph_1})\to\mathcal G(\mathcal E_{\Ph_2})$ of the corresponding
\'etale groupoids.

Recall some definitions.

Let $\mathcal G$ be a smooth groupoid and let $M$ be a smooth
manifold. {\bfseries  A right action} of $\mathcal G$ on $M$
consists of two smooth maps $\pi: M\to\mathcal G_0$ (the moment map)
and $m:M\x_{\mathcal G_0}\mathcal G_1=\{(x,g):\pi(x)=t(g)\}\to M$
(action) such that, denoting $m(x,g)=xg$, we have
$$
(xg)h=x(gh), \quad x1=x,\quad \pi(xg)=s(g).
$$
We will call $M$ a right $\mathcal G$-space with the moment map
$\pi$. A left $\mathcal G$-space is defined similarly.

A (right) $\mathcal G$-{\bfseries bundle} over the manifold $B$
consists of a smooth right $\mathcal G$-space $E$ and a smooth map
$p:E\to B$ which is $\mathcal G$-equivariant (i.e. $p(xg)=p(x)$). It
is called principal, if $p$ is a surjective submersion and the map
$E\x_{\mathcal G_0}\mathcal G_1\to E\x_BE$, defined by $(e,g)\to
(e,eg)$, is a diffeomorphism.

Let $\mathcal G$ and $\mathcal H$ be two smooth groupoids.
\begin{definition}\label{Skandalis}
A  homomorphism of groupoids (or the Hilsum-Skandalis map) $\mathcal
G\to\mathcal H$ (\cite {Hf},\cite{Mo},  \cite{Mr}) consists of a
manifold $P$, smooth maps (source and target): $s_P:P\to\mathcal
G_0$, $t_P: P\to\mathcal H_0$, a left action of $\mathcal G$ on $P$
with the moment map $s_P$, a right action of $\mathcal H$ on $P$
with the moment map $t_P$, such that
\begin{enumerate}
\item $s_P$ is $\mathcal H$-equivariant, $t_P$ is $\mathcal G$-equivariant;
\item the actions of $\mathcal G$ and $\mathcal H$ commute: $(gp)h=g(ph)$;
\item $s_P:P\to\mathcal G_0$, as an $\mathcal H$-bundle with the moment map $t_P$, is
principal.
\end{enumerate}
\end{definition}
The composition of two homomorphisms $P:\mathcal G\to\mathcal H$ and
$Q: \mathcal H\to \mathcal K$ is defined by dividing $P\x_{\mathcal
H_0}Q$ by the action of $\mathcal H$: $(p,q)h=(ph,h^{-1}q)$, and
taking the natural actions of $\mathcal G$ and $\mathcal K$.

Thus, we have the category of smooth groupoids. Two smooth groupoids
$\mathcal G$ and $\mathcal H$ are called {\bfseries Morita
equivalent} if they are isomorphic in the category of smooth
groupoids and an isomorphism $\mathcal G\to\mathcal H$ is called
{\bfseries a Morita equivalence}.

Now we compare definition \ref{morphism1} with the definition of a
morphism of smooth groupoids.

\begin{theorem}\label{homomorphism}
Let $\mathcal E_{\Ph_1}$ and $\mathcal E_{\Ph_2}$ be locally
invertible \'etale structures on the sets $X$ and $Y$ respectively
and let $\mathcal G=\mathcal G(\mathcal E_{\Ph_1})$ and $\mathcal
H=\mathcal G(\mathcal E_{\Ph_2})$ be the corresponding \'etale
groupoids. Then each homomorphism $\mathcal G\to \mathcal H$ induces
a morphism $f:X\to Y$ of the corresponding diffeological spaces
defined by $\mathcal E_{\Ph_1}$ and $\mathcal E_{\Ph_2}$.
Conversely, each morphism $f:X\to Y$ of the diffeological spaces
defines a homomorphism $\mathcal G\to\mathcal H$.
\end{theorem}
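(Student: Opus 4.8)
The plan is to exploit the concrete dictionary between the \'etale structures $\mathcal E_{\Ph_i}$ and the \'etale groupoids $\mathcal G,\mathcal H$ that was built up in the previous section, in particular the fact (established before Proposition~\ref{estructure}) that $\mathcal G(\mathcal E(\mathcal G))=\mathcal G$ for an \'etale groupoid $\mathcal G$, and that $X$, respectively $Y$, is recovered as the set of connected components $\mathcal G_0/\!\sim$, respectively $\mathcal H_0/\!\sim$. I will treat the two directions separately, and in each direction the key is to track what a morphism does on the \emph{sets} $X$ and $Y$ first, and only afterwards verify compatibility with the chart structure (conditions (1) and (2) of Definition~\ref{morphism1}).

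\emph{From a homomorphism to a morphism of diffeological spaces.} Suppose $P:\mathcal G\to\mathcal H$ is a Hilsum--Skandalis bimodule, with $s_P:P\to\mathcal G_0$ a principal $\mathcal H$-bundle and $t_P:P\to\mathcal H_0$. Since $s_P$ is a surjective submersion whose fibres are exactly $\mathcal H$-orbits, it descends to a well-defined map on the quotients: an element $[x]\in X=\mathcal G_0/\!\sim$ is sent to $[t_P(p)]\in Y=\mathcal H_0/\!\sim$ for any $p\in s_P^{-1}(x)$, and this is independent of the choice of $p$ because two choices differ by the $\mathcal H$-action, which $t_P$ intertwines with a morphism of $\mathcal H$, and it is independent of the representative $x$ in its $\mathcal G$-orbit because the $\mathcal G$-action on $P$ covers the $\mathcal G$-action on $\mathcal G_0$ and $t_P$ is $\mathcal G$-invariant. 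This defines $f:X\to Y$. To check it is a morphism of the diffeological (indeed $\mathcal M_n$-space) structures, take a chart $k_U\in\Ph_1$, i.e. the restriction of $p_X:\mathcal G_0\to X$ to an open $U\subset\mathcal G_0$; I must produce, locally on $U$, a lift of $f\o k_U$ through $p_Y:\mathcal H_0\to Y$ landing in $\Ph_2$. Using that $s_P$ is a surjective submersion one picks a local section $\si:U'\to P$ of $s_P$ over a small $U'\subset U$, and then $t_P\o\si:U'\to\mathcal H_0$ is the required smooth local factorization; shrinking further so that $t_P\o\si$ is an \'etale map onto its image exhibits it as an element of $\Ph_2$, and the morphism condition (2) is checked by transporting a germ of $\mathcal E_{\Ph_1}$-morphism along $\si$ using the commuting actions.

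\emph{From a morphism of diffeological spaces to a homomorphism.} Conversely, given $f:X\to Y$ with $f\o k\in\Ph_2$ for all $k\in\Ph_1$, I want to build the bibundle $P$. The natural candidate is the fibre product $P=\mathcal G_0\times_{Y}\mathcal H_1$, i.e. pairs $(x,h)$ with $f(p_X(x))=p_Y(t(h))$, with $s_P(x,h)=x$, $t_P(x,h)=s(h)$, the $\mathcal H$-action by right multiplication on the $\mathcal H_1$-factor, and the $\mathcal G$-action induced from the local lifts of $f$ through $p_Y$ guaranteed by the hypothesis $f\o k\in\Ph_2$: a germ $x\to x'$ of $\mathcal E_{\Ph_1}$-morphism, applied to a chart $k_U$ with $f\o k_U$ lifted locally to $\psi:U\to\mathcal H_0$, produces a germ in $\mathcal E_{\Ph_2}$ between the corresponding $\mathcal H$-charts, hence a germ of $\mathcal H$-morphism $\psi(x)\to\psi(x')$, and one uses this to move the $\mathcal H_1$-coordinate. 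One then checks that $s_P$ is a surjective submersion, that the $\mathcal H$-action is free and transitive on the fibres of $s_P$ (so that $P$ is principal over $\mathcal G_0$ as an $\mathcal H$-bundle), and that the two actions commute --- this is where Definition~\ref{Skandalis}(1)--(3) are verified one by one. Finally one should note that the construction is inverse (up to the appropriate notion of equivalence of bibundles) to the first one, though for the statement as phrased only the two existence directions are required.

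\textbf{Main obstacle.} The routine parts --- descending $t_P$ to the quotient, picking local sections of submersions --- are straightforward. The real work is the second direction: verifying that $P=\mathcal G_0\times_Y\mathcal H_1$ really carries a \emph{smooth} $\mathcal G$-action making $s_P$ a \emph{principal} $\mathcal H$-bundle. The hypothesis $f\o k\in\Ph_2$ only gives lifts of $f$ through $p_Y$ \emph{locally} and \emph{chart by chart}, with no a priori coherence between the lifts on overlapping charts; the ambiguity between two such lifts is precisely a germ of $\mathcal H$-morphism, so one must show these ambiguities assemble into a well-defined, smooth $\mathcal G$-equivariant gluing datum on $P$. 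Establishing that the resulting $s_P$ is a submersion (rather than merely \'etale on pieces) and that the map $P\times_{\mathcal H_0}\mathcal H_1\to P\times_{\mathcal G_0}P$ is a diffeomorphism is the crux, and it is here that local invertibility of the \'etale structures $\mathcal E_{\Ph_1},\mathcal E_{\Ph_2}$ (Definition~\ref{E-atlas}) has to be used in an essential way.
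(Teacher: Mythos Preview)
Your first direction (from a Hilsum--Skandalis bimodule to a morphism of diffeological spaces) is essentially the paper's argument: descend $t_P$ to the orbit spaces using principality of $s_P$, then use local sections of the submersion $s_P$ to exhibit $f\circ k_U$ as a chart on $Y$.

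In the second direction, however, your candidate bibundle is wrong, and the verification you flag as the ``main obstacle'' would actually fail. Your $P=\mathcal G_0\times_Y\mathcal H_1=\{(c,h):f(p_X(c))=p_Y(t(h))\}$ is precisely the paper's auxiliary space $\tilde P$ (with the redundant coordinate $d=t(h)$ suppressed), \emph{not} the paper's $P$. The paper then passes to the quotient by the relation $(c,d,h)\sim(c,d',h'h)$ for $h':d\to d'$, i.e.\ by the left $\mathcal H$-action on the target of $h$. This quotient is not a technicality: without it the right $\mathcal H$-action on the fibres of $s_P$ is free but \emph{not transitive}. Indeed, over a fixed $c$ the fibre is $\{h\in\mathcal H_1:\,p_Y(t(h))=f(p_X(c))\}$, and $h_2=h_1h''$ forces $t(h_1)=t(h_2)$, whereas the fibre only constrains $t(h_1),t(h_2)$ to lie in the same $\mathcal H$-orbit of $\mathcal H_0$. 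So the map $P\times_{\mathcal H_0}\mathcal H_1\to P\times_{\mathcal G_0}P$ is not surjective for your $P$, and condition~(3) of Definition~\ref{Skandalis} cannot hold. The paper's quotient collapses exactly this ambiguity in the choice of $t(h)$ within its $\mathcal H$-orbit.

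A second, smaller discrepancy: the paper's left $\mathcal G$-action is simply $g\cdot(c,d,h)=(t(g),d,h)$; it does not touch the $\mathcal H_1$-coordinate at all. The local lifts $\vh_U:U\to\mathcal H_0$ of $f\circ p_U$ that you want to use to build the $\mathcal G$-action are used by the paper, but only to define the \emph{smooth structure} on the quotient $P$ (via the open sets $P(U,\vh,\si)$), not to define either action. Your more elaborate $\mathcal G$-action via ``moving the $\mathcal H_1$-coordinate'' along $\psi$ is an attempt to mimic the functor-to-bibundle construction $P_\vh$ of the Remark following the theorem, but since $f$ is only a map of orbit spaces and not a functor, this does not globalize coherently on your larger $\tilde P$; after the paper's quotient it becomes unnecessary.
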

\begin{proof}
Let $P:\mathcal G\to\mathcal H$ be a homomorphism of smooth
groupoids. By definition of the actions of $\mathcal G$ and
$\mathcal H$ on $P$, for each $p\in P$, we have $\mathcal
Gs_P(p)=s_P(\mathcal Gp)$ and $t_P(p)\mathcal H=t_P(p\mathcal H)$.
Moreover, by (3) of \ref{Skandalis}, $s_P$ induces a surjective map
of orbit spaces $P/\mathcal G\to (\mathcal G_0/\mathcal G)$.

Let, for $p_1,p_2\in P$, we have  $s_P(\mathcal Gp_1)=s_P(\mathcal
Gp_2)$. Then there is a $g\in \mathcal G_1$ such that
$s_P(p_2)=s_P(gp_1)$ and by (3) of \ref{Skandalis} there is a unique
$h\in\mathcal H_1$ such that $gp_1=p_2h$. Therefore, by (1) of
\ref{Skandalis} we have $t_P(p_2\mathcal H)=t_P(gp_1\mathcal
H)=t_P(p_1\mathcal H)$. This implies that the correspondence
$s_P(\mathcal Gp)\to t_P(p\mathcal H)$ defines a map of orbit spaces
$f_P:\mathcal G_0/\mathcal G_1\to\mathcal H_0/\mathcal H_1$.

Put $X=\mathcal G_0/\mathcal G$ and $Y=\mathcal H_0/\mathcal H$ and
consider the natural structures of diffeological spaces on $X$ and
$Y$ defined by the groupoids $\mathcal G$ and $\mathcal H$.

Since, by (3) of \ref{Skandalis}, the $s_P$ is a surjective
submersion, for each $x\in\mathcal G_0$, there are local sections of
$s_P\to\mathcal G_0$ defined in sufficiently small neighborhoods of
$x$. Let $\si:U\to P$ be such a section. Then the composition
$t_P\o\si:U\to\mathcal H_0$ is a smooth map and its composition with
$\mathcal H_0\to\mathcal H_0/\mathcal H$ is a $\mathcal M$-chart on
$Y$ as a diffeological space. Let $\si_1,\si_2:U\to P$ be two such
sections. By (3) of \ref{Skandalis}, there is a smooth map
$h:U\to\mathcal H_1$ such that for each $y\in U$ we have
$\si_2(y)=\si_1(y)h(y)$. Then the $\mathcal M$-charts on $Y$ induced
by $\si_1$ and $\si_2$ as above coincide. Since the restrictions of
the projection $\mathcal G_0\to\mathcal G_0/\mathcal G$ to the open
subsets above form an $\mathcal M$-atlas on $X$ of the structure of
a diffeological space on $X$, the construction above induces a
morphism of diffeological spaces $X\to Y$.

Conversely, let  $\mathcal G$, $\mathcal H$ be two \'etale groupoids
determined by the locally invertible $\mathcal M_n$-atlases $\Ph_1$
on $X=\mathcal G_0/\mathcal G$ and $\Ph_2$ on $Y=\mathcal
H_0/\mathcal H$ and let $f:X\to Y$ be a morphism of the
corresponding diffeological spaces.

Put $\tilde P=\{(c,d,h):c\in \mathcal G_0,d\in \mathcal H_0,f\o
p_G(c)=p_H(d), h\in \mathcal H_1, t(h)=d\}$. Consider the following
equivalence relation on $\tilde P$: $(c,d,h)\sim (c,d',h'h)$, where
$h'\in\mathcal H_1$ and $h':d\to d'$, and the corresponding quotient
set $P=\tilde P/\sim$. Define a topology on $P$ as follows. For each
$c\in\mathcal G_0$, consider a restriction $p_U$ of a projection
$p_G:\mathcal G_0\to X$ to some neighborhood $U$ of $c$ as a
$\mathcal M$-chart on $X$. Then, if $U$ is sufficiently small, the
composition $f\o p_U$ is a $\mathcal D$-chart on $Y$ of the type
$p_H\o\vh_U$, where $\vh_U$ is a smooth map $U\to\mathcal H_0$. Let
$V$ be a neighborhood of $d=\vh_U(c)\in\mathcal H_o$ such that there
is a smooth section $\si:V\to\mathcal H_1$. We may assume that
$\vh_U(U)\subset V$. Consider a subset $P(U,\vh,\si)$ of $P$ defined
by the points of $\tilde P$ of the type $(c',\vh(c'),\si\o\vh(c'))$,
where $c'\in U$. Consider the topology on $P$ generated by the
subsets $P(U,\vh,\si)$ for all $c\in\mathcal G_0$, the maps $\vh$,
and the sections $\si$. Since each subset $P(U,\vh,\si)$ is a smooth
manifold, it is easy to check that there is a unique structure of a
smooth manifold on $P$ such that each $P(U,\vh,\si)$ is an open
submanifold of $P$.

Define the maps $\tilde s_P:\tilde P\to\mathcal G_0$ and $\tilde
t_P:\tilde P\to\mathcal H_0$ as follows: $\tilde s_P(c,d,h)=c$ and
$\tilde t_P(c,d,h)=s(h)$. It is easy to check that the maps $\tilde
s_P$ and $\tilde t_P$ induce the smooth maps $s_P:P\to\mathcal G_0$
and $t_P:P\to\mathcal H_0$. Consider the left action of $\mathcal G$
on $\tilde P$ and the right action of $\mathcal H$ on  $\tilde P$:
$g(c,d,h)=(t(g),d,h)$, where $g\in\mathcal G_1$ and $s(g)=c$, and
$(c,d,h)h'=(c,d,hh')$, where $h'\in\mathcal H_1$ and $t(h')=s(h)$.
It is easy to check that these actions induce the smooth actions on
$P$. It is clear that the map $s_P$ is $\mathcal H$-invariant, the
map $t_P$ is $\mathcal G$-invariant, the actions of $\mathcal G$ and
$\mathcal H$ commute, and $s_P:P\to\mathcal G_0$, as an $\mathcal
H$-bundle with the moment map $t_P$, is principal. Thus we have a
homomorphism of groupoids $\mathcal G\to\mathcal H$.
\end{proof}
\begin{remark*}
Let  $\vh:\mathcal G\to \mathcal H$ be a smooth covariant functor.
By definition, $\vh$ induces a map $f:\mathcal G_0/\mathcal
G\to\mathcal H_0/\mathcal H$ which is the morphism of the
corresponding diffeological spaces. It is easy to check that the
smooth morphism of groupoids $\mathcal G\to\mathcal H$, defined by
$P_\vh=\mathcal G_0\x_{\mathcal H_0}\mathcal
H_1=\{(x,h):\vh(x)=t(h)\}$, $s_P(c,h)=c$, $t_P(c,h)=s(h)$ and the
natural actions of $\mathcal G$ and $\mathcal H$ on $P_\vh$, is a
particular case of the construction of Theorem~\ref{homomorphism}.
\end{remark*}
The following corollaries are evident.

\begin{corollary}\label{Morita}
Let $\Ph_1$ and $\Ph_2$ be locally invertible $\mathcal M_n$-atlases
on the sets $X$ and $Y$ respectively and let $\mathcal G=\mathcal
G(\Ph_1)$ and $\mathcal H=\mathcal G(\Ph_2)$ be the corresponding
\'etale groupoids. Then the categories $\mathcal G$ and $\mathcal H$
are Morita equivalent iff the diffeological spaces $X$ and $Y$
defined by the $\mathcal M_n$-atlases $\Ph_1$ and $\Ph_2$ are
isomorphic as objects of the category $\mathcal M_{\on{sp}}$.
\end{corollary}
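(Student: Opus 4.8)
The plan is to deduce the statement directly from Theorem \ref{homomorphism}, the point being to upgrade the correspondence established there to a \emph{functorial} one, so that invertible Hilsum--Skandalis maps are matched with invertible morphisms of diffeological spaces. Recall that, by definition, $\mathcal G$ and $\mathcal H$ are Morita equivalent precisely when they are isomorphic in the category of smooth groupoids, whose morphisms are the homomorphisms of Definition \ref{Skandalis} with the composition described after it. For the \'etale groupoids $\mathcal G=\mathcal G(\Ph_1)$ and $\mathcal H=\mathcal G(\Ph_2)$, Theorem \ref{homomorphism} furnishes two assignments: a homomorphism $P\colon\mathcal G\to\mathcal H$ gives a morphism $f_P\colon X\to Y$ of the associated diffeological spaces, and a morphism $f\colon X\to Y$ gives a homomorphism $P_f\colon\mathcal G\to\mathcal H$; by the construction in its proof these are mutually inverse on the relevant equivalence/isomorphism classes.

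First I would check that $P\mapsto f_P$ carries the identity homomorphism of $\mathcal G$ to $\on{id}_X$: the identity homomorphism is represented by $P=\mathcal G_1$ with the translation actions, and the rule ``$s_P(\mathcal Gp)\mapsto t_P(p\mathcal H)$'' of the proof of Theorem \ref{homomorphism} becomes $s(g)\mapsto t(g)$, which is $\on{id}_X$ because $s(g)$ and $t(g)$ name the same point of $X=\mathcal G_0/\mathcal G$. Next I would verify compatibility with composition, i.e. $f_{Q\o P}=f_Q\o f_P$ for $P\colon\mathcal G\to\mathcal H$ and $Q\colon\mathcal H\to\mathcal K$: given $p\in P$ choose $q\in Q$ with $s_Q(q)=t_P(p)$ (possible since $s_Q$ is a surjective submersion); then the class of $(p,q)$ in $(P\x_{\mathcal H_0}Q)/\mathcal H$ has source $s_P(p)$ and target $t_Q(q)$, while $f_Q\bigl(f_P(s_P(p))\bigr)=f_Q(t_P(p))=f_Q(s_Q(q))=t_Q(q)$, so the two maps agree. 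In the same way one checks that $f\mapsto P_f$ respects composition up to the canonical isomorphism of Hilsum--Skandalis bimodules, which is all that is needed since Morita equivalence refers to isomorphism classes of homomorphisms.

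Granting this functoriality, the corollary is formal. If $P\colon\mathcal G\to\mathcal H$ is a Morita equivalence with quasi-inverse $Q$, then $f_Q\o f_P=f_{Q\o P}=\on{id}_X$ and $f_P\o f_Q=f_{P\o Q}=\on{id}_Y$, so $f_P$ is an isomorphism in $\mathcal M_{\on{sp}}$ and $X\cong Y$. Conversely, if $f\colon X\to Y$ is an isomorphism in $\mathcal M_{\on{sp}}$ with inverse $g$, then $P_g\o P_f\cong P_{g\o f}=P_{\on{id}_X}\cong\on{id}_{\mathcal G}$ and symmetrically $P_f\o P_g\cong\on{id}_{\mathcal H}$, so $P_f$ is an isomorphism of smooth groupoids, i.e.\ $\mathcal G$ and $\mathcal H$ are Morita equivalent.

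The main obstacle I anticipate is precisely the compatibility with composition. Since the composite of Hilsum--Skandalis maps is only defined after forming the quotient $(P\x_{\mathcal H_0}Q)/\mathcal H$, the identity $f_{Q\o P}=f_Q\o f_P$ must be extracted from an explicit description of the orbits in this quotient, and in the converse direction one must exhibit an explicit bimodule isomorphism $P_{g\o f}\to P_g\o P_f$ built from the equivalence relations defining $P_f$ and $P_g$ in the proof of Theorem \ref{homomorphism}. None of this is deep, but it is the one step requiring a genuine (if routine) computation; everything else is formal manipulation of inverses in a category.
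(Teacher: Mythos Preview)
Your proposal is correct and follows the same line the paper has in mind: the paper gives no argument beyond declaring the corollary ``evident'' after Theorem~\ref{homomorphism}, and what you have written is precisely the routine unpacking of that word---checking that the assignments $P\mapsto f_P$ and $f\mapsto P_f$ respect identities and composition (up to bimodule isomorphism), so that invertibility on one side forces invertibility on the other. Your identification of the composition-compatibility as the only step requiring actual computation is accurate, and your sketch of how to carry it out is sound; the one small sloppiness is writing $f_{Q\o P}=\on{id}_X$ when $Q\o P$ is only \emph{isomorphic} to the identity bimodule, but since $f_P$ manifestly depends only on the isomorphism class of $P$ this is harmless.
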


\begin{corollary}\label{Morita1}
Let $\Ph_1$ and $\Ph_2$ be equivalent locally invertible $\mathcal
M_n$-atlases on the set $X$. Then the corresponding \'etale
groupoids $\mathcal G(\Ph_1)$ and $\mathcal G(\Ph_2)$ are Morita
equivalent.
\end{corollary}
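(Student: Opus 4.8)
The plan is to obtain this as an immediate consequence of Corollary \ref{Morita}, taken with $Y=X$. First I would unwind what it means for $\Ph_1$ and $\Ph_2$ to be equivalent: by the discussion preceding Definition \ref{morphism}, two $\mathcal M_n$-atlases on $X$ are equivalent iff $\bar\Ph_1=\bar\Ph_2$, equivalently iff $R(\Ph_1)=R(\Ph_2)$, and in that case they define the \emph{same} structure of $\mathcal M_n$-space on $X$, namely the one carried by the common closed atlas $\bar\Ph_1=\bar\Ph_2$.

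Next I would pass from $\mathcal M_n$-spaces to diffeological spaces. As recorded after Definition \ref{morphism}, every $\mathcal M_n$-atlas on $X$ is in particular an $\mathcal M$-atlas (one merely enlarges the admissible morphisms of charts), and equivalent $\mathcal M_n$-atlases remain equivalent as $\mathcal M$-atlases; since the category of $\mathcal M$-spaces is, via $\mathcal D_{\on{sp}}\cong\mathcal M_{\on{sp}}$, the category of diffeological spaces, the diffeological space on $X$ defined by $\Ph_1$ and the one defined by $\Ph_2$ have the same underlying set $X$ and the same closed atlas. Hence the identity map $\on{id}_X:X\to X$ is an isomorphism between them as objects of $\mathcal M_{\on{sp}}$. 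Applying Corollary \ref{Morita} with this isomorphism, $\mathcal G=\mathcal G(\Ph_1)$ and $\mathcal H=\mathcal G(\Ph_2)$, we conclude that the \'etale groupoids $\mathcal G(\Ph_1)$ and $\mathcal G(\Ph_2)$ are Morita equivalent.

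The one point that needs a word of justification — and the reason the statement is not completely formal — is that the diffeological, equivalently $\mathcal M_n$-space, structure attached to a locally invertible $\mathcal M_n$-atlas is genuinely an invariant of its equivalence class; this is exactly the content of the closure operator $\Ph\mapsto\bar\Ph$ of Proposition 2.1.2 of \cite{L6} together with the stability of equivalence under enlarging the morphisms of charts. I do not expect any real obstacle beyond this, since all the substantive work has already been done in Theorem \ref{homomorphism} and Corollary \ref{Morita}. One could alternatively exhibit a Hilsum--Skandalis bibundle $\mathcal G(\Ph_1)\to\mathcal G(\Ph_2)$ directly, built from germs of the ``transition morphisms'' between charts of $\Ph_1$ and charts of $\Ph_2\subset R(\Ph_1)=R(\Ph_2)$; but this merely reconstructs in the present special case the bibundle produced in the proof of Theorem \ref{homomorphism}, so invoking Corollary \ref{Morita} is the economical route.
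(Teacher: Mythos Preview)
Your proposal is correct and matches the paper's approach: the paper simply declares both Corollary~\ref{Morita} and Corollary~\ref{Morita1} to be ``evident'' consequences of Theorem~\ref{homomorphism}, and your argument---equivalent $\mathcal M_n$-atlases yield the same diffeological structure on $X$, so the identity map is an isomorphism in $\mathcal M_{\on{sp}}$, whence Corollary~\ref{Morita} applies---is precisely the intended unpacking of ``evident.''
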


\section{Classifying topos}

Let $\Ph$ be an $\mathcal M_n$-atlas or $\mathcal M$-atlas on $X$
such that, for each $(M,k)\in\Ph$ and each open subset $U\subset M$
the restriction of $(M,k)$ with respect to the inclusion $U\subset
M$ belongs to $\Ph$. Let $S$ be a contravariant functor on the
category  $\mathcal C_{\Ph}$. For $(M,k)\in\Ph$, consider the
standard category $\mathcal C(M)$ of $M$ as topological space, i.e.
the category with objects open subsets $U$ of $M$ and morphisms the
inclusions $U_1\subset U_2$ of such open subsets. By assumption
$\mathcal C(M)$ is a subcategory of $\mathcal C_{\Ph}$. The functor
$S$ is called {\bfseries a sheaf on $\Ph$} if, for each
$(M,k)\in\Ph$, the restriction of $S$ to $\mathcal C(M)$ is a scheaf
on $M$. It is easy to see that if $\Ph$ is an invertible $\mathcal
M_n$-atlas on $X$, a sheaf on $\Ph$ defines a $\mathcal
C(\Ph)$-sheaf in sense of \cite{Mo} and, conversely, each $\mathcal
C(\Ph)$-sheaf induces a sheaf on $\Ph$. Therefore, the classifying
topos $\mathcal B(\mathcal C(\Ph))$ could be considered as the space
of sheaves on $\Ph$.

Let $\Ph$ be a $\mathcal M_n$-atlas on $X$ as above and let
$(X,\Psi)$ be the corresponding diffeological space. Since, by
definition, $\Psi=\bar\Ph$, using the inverse image functor one can
extend uniquely each scheaf on $\Ph$ to the scheaf on $\Psi$. This
remark implies the following theorem.
\begin{theorem}\label{scheaf}
Let $\Ph_1$ and $\Ph_2$ be locally invertible $\mathcal M_n$-atlases
on the sets $X$ and $Y$ respectively, $\mathcal G=\mathcal G(\Ph_1)$
and $\mathcal H=\mathcal G(\Ph_2)$ the corresponding \'etale
groupoids, $(X,\Psi_1)$ and $(Y,\Psi_2)$ the corresponding
diffeological spaces. Then each morphism $f:X\to Y$ of
diffeological spaces induces a morphism $\mathcal B(\mathcal
G)\to\mathcal B(\mathcal H)$ of toposes. If $f$ is an isomorphism,
it induces an isomorphism of the corresponding toposes.
\end{theorem}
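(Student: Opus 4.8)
The plan is to build the topos morphism $\mathcal B(\mathcal G)\to\mathcal B(\mathcal H)$ out of the data already assembled in Theorem \ref{homomorphism} together with the sheaf-extension remark preceding the statement. First I would recall that, by the remark just above, a sheaf on the invertible $\mathcal M_n$-atlas $\Ph_i$ is the same thing as a $\mathcal C(\Ph_i)$-sheaf in the sense of \cite{Mo}, and hence the same thing as an equivariant sheaf on the \'etale groupoid $\mathcal G=\mathcal G(\Ph_1)$ (respectively $\mathcal H=\mathcal G(\Ph_2)$); thus $\mathcal B(\mathcal G)$ is literally the category of sheaves on $\Ph_1$, and likewise for $\mathcal H$. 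A morphism of toposes $\mathcal B(\mathcal G)\to\mathcal B(\mathcal H)$ is then specified by an inverse-image functor $f^*$ from sheaves on $\Ph_2$ to sheaves on $\Ph_1$ which is left exact and has a right adjoint $f_*$.

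Next I would construct $f^*$ directly at the level of charts. Given a morphism of diffeological spaces $f\colon X\to Y$ and a sheaf $S$ on $\Ph_2$, for each chart $(M,k)\in\Ph_1$ the composite $f\o k$ is a $\mathcal D$-chart on $Y$, and since $\Psi_2=\bar\Ph_2$ the sheaf-extension remark lets me evaluate (the extension of) $S$ on $f\o k$; functoriality in $k$ is inherited from condition (2) of Definition \ref{morphism1}, which guarantees that every morphism of charts in $\mathcal E_{\Ph_1}$ is carried to a morphism of charts in $\mathcal E_{\Ph_2}$, so the assignment $(M,k)\mapsto S(f\o k)$ is a presheaf on $\mathcal C_{\Ph_1}$; sheafifying on each $\mathcal C(M)$ yields $f^*S$. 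Left exactness of $f^*$ follows because the construction is a composite of restriction along a (continuous) site morphism with sheafification, both of which preserve finite limits; the existence of the right adjoint $f_*$ is then formal (it can also be written explicitly as $(f_*T)(N,\ell)=\varprojlim T$ over charts of $X$ lying over $\ell$, using that $\mathcal C_{\Ph_1}$ is small). Concretely this geometric morphism is exactly the one induced by the Hilsum--Skandalis bimodule $P$ produced in the proof of Theorem \ref{homomorphism}: the bibundle $P$ gives a span $\mathcal G\leftarrow P\rightarrow\mathcal H$ of groupoids, and pull-push of equivariant sheaves along this span is the standard way a generalized morphism of \'etale groupoids induces a topos morphism; so an alternative, cleaner route is to invoke Theorem \ref{homomorphism} to get $P$ and then cite the (well-known) functoriality of $G\mapsto\mathcal B(G)$ on the category of smooth groupoids and Hilsum--Skandalis maps.

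For the last sentence, suppose $f$ is an isomorphism of diffeological spaces. Then $f^{-1}$ is again a morphism of diffeological spaces, so it induces $(f^{-1})^*\colon \mathcal B(\mathcal G)\to\mathcal B(\mathcal H)$ by the construction just given; because $f^*$ and $(f^{-1})^*$ are defined by precomposition of charts with $f$ and $f^{-1}$ respectively, the composites $(f^{-1})^*\o f^*$ and $f^*\o (f^{-1})^*$ are (naturally isomorphic to) the identity functors, so $f^*$ is an equivalence of categories and the induced geometric morphism is an equivalence of toposes. Equivalently, in the bibundle picture: an isomorphism of diffeological spaces corresponds under Corollary \ref{Morita} to a Morita equivalence $\mathcal G\simeq\mathcal H$, and a Morita equivalence of \'etale groupoids induces an equivalence of their classifying toposes.

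I expect the main obstacle to be verifying, with the precise definitions in hand, that the presheaf $(M,k)\mapsto S(f\o k)$ is genuinely functorial and that the resulting $f^*$ is left exact rather than merely a functor — i.e. pinning down that the chart-level construction really is a morphism of sites (so that the general machinery of geometric morphisms applies) rather than something weaker. Once the site-morphism statement is isolated, both the construction of the geometric morphism and the isomorphism case are formal; the bibundle route sidesteps this but at the cost of importing the functoriality of $G\mapsto\mathcal B(G)$ from the literature.
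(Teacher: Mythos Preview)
Your proposal is correct and in fact considerably more detailed than what the paper offers: the paper gives no formal proof at all, merely stating that the theorem follows from the sheaf-extension remark immediately preceding it (``using the inverse image functor one can extend uniquely each sheaf on $\Ph$ to the sheaf on $\Psi$''). Your primary route---evaluating a sheaf $S$ on $\Ph_2$ at the composite charts $f\circ k$ and invoking that $\Psi_2=\bar\Ph_2$---is exactly the mechanism the paper has in mind, and your use of Corollary~\ref{Morita} for the isomorphism case is the natural completion of the argument. The alternative bibundle route via Theorem~\ref{homomorphism} that you sketch is not mentioned in the paper but is a legitimate and arguably cleaner packaging of the same content.
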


\section{Nerve and classifying space for a smooth groupoid}

For a groupoid $\mathcal G$, denote by $\mathcal G_n$ the space of
composable strings of morphisms  in $\mathcal G_n$:
\begin{equation}\label{string}
x_0\stackrel{g_1}{\longleftarrow}x_1\stackrel{g_2}{\longleftarrow}...
\stackrel{g_n}{\longleftarrow}x_n.
\end{equation}

The spaces $\mathcal G_n$ ($n\geq 0$) with the face maps
$d_i:\mathcal G_n\to\mathcal G_{n-1}$ defined in the usual way:
$$
d_i(g_1,\dots,g_n)=
\begin{cases}
(g_2,\dots,g_n)&\text{if $i=0$}\\
(g_1,\dots,g_i g_{i+1},\dots,g_n)&\text{if $1\le i\le n-1$}\\
(g_1,\dots,g_{n-1})&\text{if $i=n$}
\end{cases}
$$
form a simplicial space, the {\bfseries nerve of} $\mathcal G$,
denoted by $N(\mathcal G)$. Its geometric realization is {\bfseries
the classifying space} of $\mathcal G$, denoted $B\mathcal G$.

For a smooth groupoid $\mathcal G$, the notation above agree with
the earlier notation for $n=0,1,2$. Thus, the space $\mathcal G_n$
is a fibered product $\mathcal G_1\x_{\mathcal G_0}\x...\x_{\mathcal
G_0}\mathcal G_1$ and, hence, has a structure of a smooth manifold.

 A Morita equivalence
$\vh:\mathcal G\stackrel{~}{\longrightarrow}\mathcal H$ induces a
weak homotopy equivalence $B\mathcal
G\stackrel{~}{\longrightarrow}B\mathcal H$ \cite{Hf1}.

Corollary \ref{Morita} and \cite{Mo} imply the following

\begin{theorem}\label{contraction}
Let $\Ph_1(X)$ and $\Ph_2(X)$ be two full $\mathcal M_n$-atlases on
$X$ such that $\Ph_1(X)\subset\Ph_2(X)$ and let $\mathcal
G({\Ph_1(X)})$ and $\mathcal G({\Ph_2(X)})$ be the correspponding
\'etale groupoids. Then the classifying spaces of $\mathcal
G(\Ph_1(X))$ and $\mathcal G(\Ph_2(X))$ are homotopy equivalent.
\end{theorem}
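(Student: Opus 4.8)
The plan is to reduce the statement to the Morita invariance of the classifying space, recalled (in weak form) just before the theorem, by first showing that the hypothesis forces $\Ph_1(X)$ and $\Ph_2(X)$ to be \emph{equivalent} atlases, not merely comparable ones.

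So the first step is to prove $\bar\Ph_1(X)=\bar\Ph_2(X)$. The inclusion $\bar\Ph_1(X)\subset\bar\Ph_2(X)$ is just monotonicity of the closure $\Ph\mapsto\bar\Ph$. For the converse I would show that every chart $k\colon M\to X$ of $\Ph_2(X)$ already belongs to $\bar\Ph_1(X)$, by verifying that $k$ factors, on a neighbourhood of each point of $M$, through a chart of $\Ph_1(X)$ via a morphism of $\mathcal M_n$; once this is known, covering $M$ by such neighbourhoods exhibits $k$ as an element of $G\o R(\Ph_1(X))=\bar\Ph_1(X)$. To obtain the local factorisation, fix $x_0\in M$. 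Since $\Ph_1(X)$ is an atlas, the charts $l\colon D(l)\to X$ ($l\in\Ph_1(X)$) cover $X$, so $k(x_0)=l_0(x_1)$ for some $l_0\in\Ph_1(X)$, $x_1\in D(l_0)$; since $\Ph_2(X)$ is an atlas, this equality in the inductive limit defining $X$ from $\Ph_2(X)$ means that $(k,x_0)$ and $(l_0,x_1)$ are joined by a finite zig-zag of morphisms of $\mathcal C_{\Ph_2(X)}$. I would then induct along this zig-zag, starting from the $l_0$-end, propagating the property ``the current chart factors through a chart of $\Ph_1(X)$ on a neighbourhood of the current point''. The inductive step works precisely because every morphism of $\mathcal M_n$ is an \'etale map, hence an open local diffeomorphism: a link of the zig-zag pointing towards the current chart may be inverted locally, a link pointing away may be post-composed, and in both cases the composite with the (\'etale) factorising map supplied by the previous step is again \'etale. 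At the end of the zig-zag the current chart is $k$ itself, which gives the desired local factorisation near $x_0$.

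Granting $\bar\Ph_1(X)=\bar\Ph_2(X)$, the atlases $\Ph_1(X)$, $\Ph_2(X)$ are equivalent locally invertible $\mathcal M_n$-atlases on $X$, so Corollary~\ref{Morita1} yields that the \'etale groupoids $\mathcal G_{\Ph_1(X)}$ and $\mathcal G_{\Ph_2(X)}$ are Morita equivalent; equivalently, since the two diffeological structures on $X$ coincide, the identity of $X$ is an isomorphism of the associated diffeological spaces and Corollary~\ref{Morita} applies. Finally, by the Morita invariance of classifying spaces (recalled above from \cite{Hf1}, together with \cite{Mo}), this Morita equivalence induces a weak homotopy equivalence $B\mathcal G_{\Ph_1(X)}\to B\mathcal G_{\Ph_2(X)}$; both classifying spaces are geometric realizations of simplicial manifolds, hence have CW homotopy type, so Whitehead's theorem upgrades the weak equivalence to an honest homotopy equivalence.

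The main obstacle is the first step: making the zig-zag induction rigorous while correctly bookkeeping the possibly multi-sheeted nature of the charts and the fact that the local inversions are available only because the transition morphisms of $\mathcal M_n$ are \'etale — over the category $\mathcal M$ of all smooth maps the analogous statement is false, so the argument genuinely uses the \'etale hypothesis. A secondary, smaller point is the passage from weak homotopy equivalence to homotopy equivalence, which relies on the classifying space of a smooth (\'etale) groupoid having the homotopy type of a CW-complex even though its manifold of arrows need not be Hausdorff.
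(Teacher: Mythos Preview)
Your proposal is correct and follows essentially the same route as the paper: the paper's entire argument is the one-line remark ``Lemma~\ref{Morita} and \cite{Mo} imply the following'', i.e.\ pass from the atlases to Morita equivalent groupoids via Corollary~\ref{Morita}/\ref{Morita1} and then invoke Morita invariance of the classifying space. Your zig-zag argument for $\bar\Ph_1(X)=\bar\Ph_2(X)$ and the Whitehead upgrade from weak to genuine homotopy equivalence are details the paper leaves implicit, but the strategy is the same.
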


Let $\mathcal G$ be a \'etale groupoid such that its set of objects
$\mathcal G_0$ is an $n$-dimensional manifold $M$. For example, for
the Haefliger groupoid $\Ga_n$ we have $M=\mathbb R^n$. Let
$\mathcal F$ be a foliation on a manifold $N$ of codimension $n$ and
let $T$ be a complete transversal of $\mathcal F$. Consider the full
$\mathcal M_n$-atlas $\Ph$ on the set of leaves $N/\mathcal F$
determined by $T$ and the corresponding \'etale category $\mathcal
G(\Ph)$. For this category we have $M=T$.

In this case one can define the so-called embedding category
$\on{Emb}(\mathcal G)$. Objects of the category $\on{Emb}(\mathcal
G)$ are the elements of a fixed base $\mathcal U$ of the topology of
$M=G_0$ consisting of contractible open subsets. For two such open
subsets $U$ and $V$, each section $\si:U\to \mathcal G_1$ of the
source map $s$ with the property that $t\o\si:U\to \mathcal G_0$
defines an embedding $\hat\si:U\to V$, is a morphism of the category
$\on{Emb}(\mathcal G)$. The composition in $\on{Emb}(\mathcal G)$ is
defined by $\hat\ta\o\hat\si(x)=\ta(t\si(x))\cdot\si(x)$
(multipliation in $\mathcal G$). The nerve of the category
$\on{Emb}(\mathcal G)$ is the simplicial set, whose geometric
realization is the classifying space of $\on{Emb}(\mathcal G)$ and
is denoted by $B\on{Emb}(\mathcal G)$. There is the following

\begin{theorem} (\cite{Mo1})
For any \'etale groupoid $\mathcal G$, the classifying spaces
$B\mathcal{G}$ and $B\on{Emb}(\mathcal G)$ are weakly homotopy
equivalent.
\end{theorem}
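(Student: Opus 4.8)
The plan is to read this as a ``twisted nerve theorem'' and to transfer the mechanism of Segal's nerve theorem to the groupoid setting, the contractibility of the objects of $\on{Emb}(\mathcal G)$ taking over the role of a good-cover hypothesis. Write $M=\mathcal G_0$ and let $\mathcal U$ be the basis of contractible opens of $M$ indexing the objects of $\on{Emb}(\mathcal G)$. The first step is a Morita reduction: the disjoint union $N=\coprod_{U\in\mathcal U}U$ maps by an \'etale surjection to $M$, so the pullback groupoid $\mathcal G[\mathcal U]$ (object manifold $N$, arrow manifold $N\times_M\mathcal G_1\times_M N$) is Morita equivalent to $\mathcal G$. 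Concretely, $\mathcal G[\mathcal U]$ is the \'etale groupoid $\mathcal G(\Ph)$ of the $\mathcal M_n$-atlas $\Ph$ obtained by restricting the projection $\mathcal G_0\to X$ to the members of $\mathcal U$, and $\Ph$ is equivalent to the full atlas; hence $B\mathcal G[\mathcal U]\simeq B\mathcal G$ by the Morita invariance of the classifying space recalled above together with Corollaries~\ref{Morita} and~\ref{Morita1}. It is therefore enough to compare $B\on{Emb}(\mathcal G)$ with $B\mathcal G[\mathcal U]$, two objects now built from the same indexing family $\mathcal U$.

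The comparison is mediated by germs: a morphism $\si\colon U\to V$ of $\on{Emb}(\mathcal G)$, being a section of the source map over $U$, is precisely a continuous $U$-family of arrows of $\mathcal G[\mathcal U]$ running from the $U$-chart to the $V$-chart, $y\mapsto(y,\;\mathrm{germ}_y\si,\;t\si(y))$. This yields a comparison of the two nerves, but neither is a levelwise weak equivalence (already in degree $0$, the object set of $\on{Emb}(\mathcal G)$ is discrete while $\mathcal G[\mathcal U]_0=N$ is not), so one interpolates by a bisimplicial space in Segal's style. Realizing first in the ``groupoid direction'' resolves each stage of $N(\on{Emb}(\mathcal G))$ and yields $B\mathcal G[\mathcal U]\simeq B\mathcal G$ -- this half is a \v{C}ech-type resolution and is always valid -- while realizing first in the ``combinatorial direction'' replaces each object of $\on{Emb}(\mathcal G)$ by the contractible manifold it is and collapses to $B\on{Emb}(\mathcal G)$; this is the only place where the hypothesis on $\mathcal U$ is used. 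A realization lemma of Bousfield--Friedlander type, fed the levelwise weak equivalences produced in the two directions, then identifies the two iterated realizations and delivers $B\on{Emb}(\mathcal G)\simeq B\mathcal G$.

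The main obstacle is precisely this last step: writing down the interpolating bisimplicial space and checking both families of levelwise weak equivalences -- above all that the collapse in the combinatorial direction is an equivalence, which, as in the classical nerve theorem, requires $\mathcal U$ to be fine enough (a ``good'' basis: finite intersections again contractible, or empty), a point the present exposition leaves implicit. The remaining ingredients -- the Morita reduction, the germ description of $\on{Emb}$-morphisms, and the \v{C}ech resolution in the groupoid direction -- are formal consequences of the material of Sections~4 and~6.

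A more conceptual route avoids the bisimplicial bookkeeping. Endow $\on{Emb}(\mathcal G)$ with the Grothendieck topology whose covers are the families of embeddings with jointly surjective images; its topos of sheaves is then the topos $\mathcal B(\mathcal G)$ of $\mathcal G$-sheaves of Section~5 (cf.\ Theorem~\ref{scheaf}). Because $B\mathcal G$ models the weak homotopy type of $\mathcal B(\mathcal G)$ and, for a site of this kind, so does the classifying space of the underlying category \cite{Mo1}, the spaces $B\on{Emb}(\mathcal G)$ and $B\mathcal G$ coincide up to weak homotopy -- the same theorem seen through the equivalence of classifying topoi rather than an explicit resolution.
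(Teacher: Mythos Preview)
The paper does not prove this theorem: it is quoted verbatim from \cite{Mo1} and carries no argument of its own, only the citation. There is therefore nothing in the paper to compare your proposal against. Your sketch --- Morita reduction to a \v{C}ech cover groupoid $\mathcal G[\mathcal U]$, followed by a Segal-style bisimplicial interpolation between $N(\on{Emb}(\mathcal G))$ and $N(\mathcal G[\mathcal U])$, with the contractibility of the members of $\mathcal U$ driving the collapse in one direction --- is in fact a fair outline of Moerdijk's own proof in \cite{Mo1}, and your alternative topos-theoretic reading is likewise his. One small correction: the ``good cover'' worry you raise (finite intersections again contractible) is not needed in Moerdijk's argument, because the bisimplicial object is indexed by composable strings of $\on{Emb}(\mathcal G)$-morphisms rather than by intersections of opens; the levelwise contractibility used in the combinatorial direction comes from the contractibility of the individual $U\in\mathcal U$ and of the section spaces over them, not from intersections.
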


In contrast to $B\mathcal G$, the classifying space
$B\on{Emb}(\mathcal G)$ is a $\on{CW}$-complex.

\begin{corollary}\label{weak}
The classifying spaces $B\mathcal G(\Ph)$ and $B\on{Emb}(\mathcal
G(\Ph))$ are weakly homotopy equivalent.
\end{corollary}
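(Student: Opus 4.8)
The plan is to derive this directly from the theorem of Moerdijk quoted just above, namely that for any \'etale groupoid $\mathcal G$ the classifying spaces $B\mathcal G$ and $B\on{Emb}(\mathcal G)$ are weakly homotopy equivalent. By the discussion of the foliation example, $\mathcal G_\Ph$ is, up to Morita equivalence, the \'etale model $\on{Hol}_T(N,\mathcal F)$ of the holonomy groupoid, hence an \'etale groupoid whose object manifold is the complete transversal $T=(\mathcal G_\Ph)_0$; so Moerdijk's theorem applies and gives that $B\mathcal G_\Ph$ and $B\on{Emb}(\mathcal G_\Ph)$ are weakly homotopy equivalent. It therefore remains to identify $C_\Ph$, up to a weak homotopy equivalence of classifying spaces, with the embedding category $\on{Emb}(\mathcal G_\Ph)$.

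For this I would first record the dictionary between morphisms of $\mathcal M_n$-charts and local sections of the source map, exactly as in the verification above that $\mathcal E(\mathcal G)$ is a locally invertible \'etale structure: for open subsets $U,V$ of $T$, a section $\si\colon U\to(\mathcal G_\Ph)_1$ of the source map with $f_\si=t\circ\si$ satisfying $f_\si(U)\subset V$ is the same datum as a morphism $k_U\to k_V$ of the charts $k_U,k_V$ obtained by restricting the projection $p\colon T\to X$, and every such $f_\si$ is \'etale. Hence the objects of $\on{Emb}(\mathcal G_\Ph)$ — the members of a fixed base $\mathcal U$ of contractible opens of $T$ — and its morphisms — those $\si$ for which $f_\si$ is an embedding — are, respectively, a distinguished family of charts of $\Ph$ (which belong to $\Ph$ because $\Ph$ is full) and a distinguished class of chart morphisms. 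This yields an inclusion functor $\iota\colon\on{Emb}(\mathcal G_\Ph)\to C_\Ph$; if $C_\Ph$ is this very category, the claim follows at once, and otherwise it remains to see that $\iota$ induces a weak homotopy equivalence on classifying spaces.

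That last step I would carry out with Quillen's Theorem A: for each chart $k$ of $C_\Ph$ the comma category $\iota\downarrow k$ should have contractible nerve. Since every chart morphism $g\colon D(k')\to D(k)$ is \'etale, it restricts to an embedding on a neighbourhood of each point, so one may pass to the cofinal subcategory of objects $(V,g)$ with $g$ an embedding, and transport along $g$ turns this into the category of sufficiently fine contractible open subsets of $D(k)$ refining a good cover; its nerve is contractible by the nerve theorem for the manifold $D(k)$. Combining the resulting equivalence $BC_\Ph\simeq B\on{Emb}(\mathcal G_\Ph)$ with Moerdijk's theorem gives the corollary. The point requiring the most care is precisely this cofinality argument: it is essentially a nerve theorem, but it has to be run inside $C_\Ph$ while respecting the holonomy structure carried by its morphisms, and one must keep in mind that the full poset of contractible opens of $D(k)$ models $D(k)$ rather than a point, so the restriction to refinements of a good cover — equivalently, the choice of the base $\mathcal U$ — must be arranged explicitly.
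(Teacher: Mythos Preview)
The paper offers no proof: the statement is presented as an immediate corollary of Moerdijk's theorem, and in the very next section the paper writes ``By definition, $\on{Emb}(\mathcal D_{\Ph})$ is the embedding category for the \'etale groupoid $\mathcal G_\Ph$,'' i.e.\ it simply identifies $C_\Ph$ (up to notation) with $\on{Emb}(\mathcal G_\Ph)$ and invokes Moerdijk. Your plan is therefore the same as the paper's: apply Moerdijk to $\mathcal G_\Ph$ and match $C_\Ph$ with the embedding category. The difference is only that you try to \emph{justify} that identification rather than assert it.

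That extra justification via Quillen's Theorem~A, however, is not yet solid. Two points deserve care. First, morphisms in $\on{Emb}(\mathcal G_\Ph)$ are \emph{sections} $\si$ of the source map, whereas morphisms in $\mathcal C_\Ph$ are the induced \'etale maps $\hat\si=t\circ\si$; your functor $\iota$ sends $\si\mapsto\hat\si$, and this is faithful only when the groupoid is effective. You should either assume effectivity (which holds for the holonomy groupoid you have in mind) or work with the category $\mathcal E(\mathcal G)$ of sections rather than $\mathcal C_\Ph$. Second, your contractibility argument for $\iota\downarrow k$ conflates two different ``nerve'' statements: the \v Cech nerve of a good cover is homotopy equivalent to the \emph{space}, while the poset of opens with a terminal element is contractible. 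In your comma category there is no terminal object in general (the chart $D(k)$ itself need not lie in $\mathcal U$), and ``transport along $g$'' does not land you in a poset of opens of $D(k)$ because distinct holonomy morphisms $g$ can have the same image. What does work is this: since $D(k)$ is a manifold and $\mathcal U$ restricts to a basis of contractible opens on it, the subcategory of $(V,g)$ with $g$ an open embedding is cofinal (every \'etale $g$ restricts to embeddings on small enough pieces, and these restrictions are morphisms of the comma category), and that subcategory is a filtered union of categories each having an initial object---but you must actually carry this out, and it is more than a one-line nerve theorem. Your own closing caveat already flags exactly this; it is the step that needs to be written in full.
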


\section{The \v{C}ech-De Rham cohomology of $\mathcal M_n$-spaces}

 Let $X$ be an $\mathcal
M_n$-space defined by a full $\mathcal M_n$-atlas $\Ph(X)$ and let
$\mathcal G(\Ph(X))$ be the corresponding \'etale groupoid. The
space of objects of $\mathcal G(\Ph(X))$ is
 an $n$-dimensional manifold $M$. Let $\mathcal{U}$ be a fixed base
 of the topology on $M$. Consider the \v{C}ech complexes
for $\mathcal G(\Ph(X))$ as follows:
$$
\check C_\mathcal U(\Ph(X),\Om^p):
\prod\limits_{U_0}\Om^p(U_0)\stackrel{\de}{\longrightarrow}
\prod\limits_{U_0\stackrel{g_1}{\longrightarrow}U_1}\Om^p(U_0)
\stackrel{\de}{\longrightarrow}\prod\limits_{U_0\stackrel{g_1}{\longrightarrow}U_1
\stackrel{g_2}{\longrightarrow}U_2}\Om^p(U_0)\stackrel{\de}{\longrightarrow}...,
$$
where the product is taken over the strings of composable arrows of
$\mathcal U$, with the boundary
\begin{multline}
(\de\om)(g_1,\dots,g_{p+1})=g_1^*\om(g_2,\dots,g_{p+1})+\\
\sum_{i=1}^p(-1)^i\om(g_1,\dots,g_{i+1}g_i,\dots,g_{p+1})+(-1)^{p+1}\om(g_1,\dots
g_p).
\end{multline}

{\bfseries  The \v{C}ech-De Rham complex} $(\check C_\mathcal
U(\Ph(X),\Om^*),D)$ of a $\mathcal M_n$-space $X$ is the total
complex for the double complex $(\check C_\mathcal
U\Ph(X),\Om^*),\de,d)$, where $d$ is the De Rham differential and
$D=\de\pm d$ with the standard sign convention.

Denote by $\check H^*_{\mathcal U}(\Ph(X);\mathbb R)$ the cohomology
of the total complex $(\check C_\mathcal U(\Ph(X),\Om^*),D)$ and
call it the \v{C}ech-De Rham cohomology of the $\mathcal M_n$-space
$X$ (relative to $\Ph(X)$ and $\mathcal U$).

Consider the first spectral sequence of the double complex $(\check
C_\mathcal U(X,\Om^*),\de,d)$. It is clear that, for the spectral
sequence, we have $E_2^{pq}=E_\infty^{pq}=0$ for $q>0$ and
$E_2^{p0}=E_\infty^{p0}=H^p(B\mathcal G_\Ph;\mathbb R)$. Therefore,
we have an isomorphism $\check H^*_\Ph(X;\mathbb R)=H^*(B\mathcal
G_\Ph;\mathbb R)$. Thus, we have the following
\begin{theorem}
For an $\mathcal M_n$-space $X$ defined by a full $\mathcal
M_n$-atlas $\Ph(X)$, the \v{C}ech-De Rham cohomology $\check
H^*_{\mathcal U}(\Ph(X);\mathbb R)$ does not depend on the choice
neither of a full $\mathcal M_n$-atlas $\Ph(X)$, nor on a base
$\mathcal U$ and is denoted by $H^*(X;\mathbb R)$.
\end{theorem}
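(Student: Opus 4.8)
The plan is to split the independence claim into its two constituent parts, corresponding to the two parameters $\mathcal U$ and $\Ph$. Throughout, I will exploit the key observation, already recorded in the excerpt, that the first spectral sequence of the double complex $(\check C_{\mathcal U}(\Ph(X),\Om^*),\de,d)$ degenerates: filtering by the \v{C}ech degree and taking $d$-cohomology first, the De Rham theorem on each $U_0$ (here one needs that the $U_0$ range over a base $\mathcal U$ of \emph{contractible} open sets, so that $H^l_{dR}(U_0)=0$ for $l>0$ and $=\mathbf R$ for $l=0$) gives $E_1^{p,0}=\check C^p(\mathcal U,\underline{\mathbf R})$ computed over strings of composable arrows of $\mathcal G_\Ph$, and $E_1^{p,l}=0$ for $l>0$. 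The $E_2$-page is then $H^p(B\mathcal G_\Ph,\mathbf R)$, since the \v{C}ech complex with constant coefficients over the nerve of $\mathcal G_\Ph$ is exactly the cochain complex computing the cohomology of the simplicial space $N(\mathcal G_\Ph)$, whose realization is $B\mathcal G_\Ph$. Hence $\check H^*_{\mathcal U}(\Ph(X),\mathbf R)\cong H^*(B\mathcal G_\Ph,\mathbf R)$, and this already eliminates the base $\mathcal U$ from the answer: any two bases of contractible opens give canonically isomorphic cohomology, because both are identified with $H^*(B\mathcal G_\Ph,\mathbf R)$.

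It remains to remove the dependence on the choice of full $\mathcal M_n$-atlas $\Ph$. Suppose $\Ph_1$ and $\Ph_2$ are two full $\mathcal M_n$-atlases defining the same $\mathcal M_n$-space structure on $X$; equivalently they are equivalent atlases, so $\bar\Ph_1=\bar\Ph_2$ and the associated diffeological spaces coincide. By Corollary \ref{Morita1} the corresponding \'etale groupoids $\mathcal G(\Ph_1)$ and $\mathcal G(\Ph_2)$ are Morita equivalent. Invoking the cited fact (Haefliger, \cite{Hf1}) that a Morita equivalence $\mathcal G\xrightarrow{\ \sim\ }\mathcal H$ induces a weak homotopy equivalence $B\mathcal G\xrightarrow{\ \sim\ }B\mathcal H$, we get $H^*(B\mathcal G(\Ph_1),\mathbf R)\cong H^*(B\mathcal G(\Ph_2),\mathbf R)$. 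Combining this with the spectral-sequence identification of the previous paragraph yields $\check H^*_{\mathcal U_1}(\Ph_1(X),\mathbf R)\cong\check H^*_{\mathcal U_2}(\Ph_2(X),\mathbf R)$ for arbitrary choices of bases $\mathcal U_1,\mathcal U_2$, which is the asserted independence; one then writes $H^*(X,\mathbf R)$ for this common value.

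One technical point deserves care and I expect it to be the main obstacle: the hypothesis in the statement is that $X$ is defined by a full $\mathcal M_n$-atlas whose charts are of the type $(U,k)$ with $U\in\mathcal U$, and implicitly that the space of objects of $\mathcal G_\Ph$ is an $n$-manifold $M$ (as set up just before the theorem). When comparing two such atlases one must make sure the Morita equivalence of Corollary \ref{Morita1} is available, i.e.\ that both atlases are locally invertible, and that the reduction of $\mathcal G_\Ph$ to a groupoid with object manifold $M$ (Remark \ref{reduced}) does not change the Morita class — this is exactly the content of the reduction being a Morita equivalence, which must be checked or quoted. A second subtlety is that the degeneration argument needs the charts indexed by a base of contractible sets so that the column complexes are De Rham resolutions of the constant sheaf; if a given full atlas is not presented that way, one first replaces it by an equivalent one consisting of restrictions to contractible opens (possible since the atlas is full and by Proposition \ref{C-atlas}/closure under restriction), which again does not disturb the Morita class by Corollary \ref{Morita1}. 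Once these compatibilities are in place, the two displayed isomorphisms chain together and the theorem follows.
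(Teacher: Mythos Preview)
Your proposal is correct and follows essentially the same route as the paper: the spectral sequence identification $\check H^*_{\mathcal U}(\Ph(X),\mathbf R)\cong H^*(B\mathcal G_\Ph,\mathbf R)$ disposes of $\mathcal U$, and then Corollary~\ref{Morita1} together with the Haefliger weak homotopy equivalence for Morita equivalent groupoids disposes of $\Ph$. The paper presents this argument in a single terse paragraph preceding the theorem; your write-up is more explicit about the de Rham acyclicity of the contractible $U_0$'s and about the compatibility checks (local invertibility, reduction of the object manifold), but these are exactly the points the paper leaves implicit.
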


The cohomology $H^*(X;\mathbb R)$ is called the real cohomology of
$\mathcal M_n$-space $X$ and denoted by $H^*(X;\mathbb R)$.

The \v{C}ech-De Rham cohomology for a foliation are defined in
\cite{Cr}.


\section {Categories $\mathcal P_n(G)$ and $\mathcal P_n(G)$-spaces}

Let $G$ be a Lie group.  Consider a category $\mathcal P_n(G)$ with
objects smooth principal $G$-bundles with $n$-dimensional bases and
morphisms the morphisms of such principal $G$-bundles which project
to the \'etale maps of bases. Applying the general procedure of
\cite{L6} to this category one obtains the category of $\mathcal
P_n(G)$-spaces. By definition, a $\mathcal P_n(G)$-chart on a set
$Y$ is a pair $(k,P)$, where $k:P\to Y$,  $P\in (\mathcal
P_n(G))_0$. Morphisms of such charts are defined by means of
morphisms of the category $\mathcal P_n(G)$. A $\mathcal
P_n(G)$-atlas on a set $Y$ is the set $\Psi$ of $\mathcal
P_n(G)$-charts on $Y$ such that the set of maps $k:J\o
I_{\Psi(Y)}(P,k)=J(P)\to Y$ ($(P,k)\in\Psi(Y))$) is an inductive
limit $\varinjlim J\o I_{\Psi(Y)}$ of the functor $J\o I_\Psi$,
where the functors $J$ and $I_{\Psi(Y)}$ are similar to those from
section \ref{secdefCsp}. Therefore, one defines a $\mathcal
P_n(G)$-space as a set $Y$ with  a maximal $\mathcal P_n(G)$-atlas
$\Psi_m(Y)$ on $Y$. For a principal smooth $G$-bundle $P$ with a
base $B$ the projection $p:P\to B$ is a covariant functor from
$\mathcal P_n(G)$ to $\mathcal M_n$. By the definition of the
category $\mathcal P_n$, for any $\mathcal P_n$-space $Y$ the
$\mathcal P_n$-atlas on $Y$ induces a $\mathcal M_n$-atlas on some
set $B_Y$ so that the extension of the functor $P\to B$ gives the
covariant functor $\tilde Y\to B_Y$ from the category of $\mathcal
P_n$-spaces to the category $\mathcal M_{n,\on{sp}}$. The $\mathcal
M_n$-set $B_Y$ is called the base of the $\mathcal P_n$-space $Y$.

A $\mathcal P_n(G)$-atlas $\Psi$ on a set $Y$ is called full if the
corresponding $\mathcal M_n$-atlas on the base $B_Y$ is full.

\begin{remark*}
1. For a $\mathcal M_n$-space $X$, consider the corresponding frame
bundle $Fr(X)$ (see section \ref{secstruc}). By definition, $Fr(X)$
is a $\mathcal P_n(\on{GL}(n,\mathbb R))$-space.

2. Consider a transversal principal $G$-bundle $P$ over a foliation
$(N,\mathcal F)$ of codimension $n$.  One can consider $P$ as a
principal $G$-bundle over $N$ with a smooth action of the holonomy
groupoid $\on{Hol}(\mathcal F)$ of the foliation $\mathcal F)$ on
$P$ which commutes with the action of $G$ on $P$ (see \cite{KT}). It
is evident that the action of $\on{Hol}(\mathcal F)$ on $P$ induces
an action on $N$. It is easy to construct a full $\mathcal
P_n(G)$-atlas on the set $Y$ of orbits of the groupoid
$\on{Hol}(\mathcal F)$ on $N$.
\end{remark*}

Let $G$ be a Lie group with Lie algebra $\mathfrak g$ and let
$p:P\to B$ be a smooth principal $G$-bundle. For $\xi\in\mathfrak
g$, denote by $X_\xi$ the corresponding vector field on $P$ induced
by the action of $G$ on $P$. Let $T_p$ be a tangent space at $p\in
P$ and let $l_p:T_p\to \mathfrak g$ be a linear map such that, for
any $\xi\in\mathfrak g$, we have $l_p(X_\xi(p))=\xi$. The space
$\tilde P$ of all such linear maps $l_p$ ($p\in P)$ is a smooth
fiber bundle over $P$ with the projection $l_p\to p$.

Consider a right action of $G$ on $\tilde P$ defined by: $g\to
\al(g)=\on{Ad}_{g^{-1}}\o l_p\o (R_g)_*^{-1}$, where $g\to R_g$ is
the action of $G$ on $P$.

The following lemma is evident.
\begin{lemma}\label{tilde P}
The fiber bundle $\tilde P\to P$ possesses the following properties:
\begin{enumerate}
\item the fibers of the fiber bundles $\tilde P\to P$ and $\tilde P/G\to B$ are affine spaces and, then,
the spaces $\tilde P/G$ and $B$ are homotopy equivalent;
\item the transformation $\al_g$ is an automorphism of the affine fiber bundle
$\tilde P\to P$;
\item the correspondence $g\to\al_g$ defines a free action of $G$ on $\tilde P$ and
$\tilde P\to \tilde P/G$ is a smooth principal $G$-bundle;
\item $\tilde P/G$ is a smooth fiber bundle over $B$ with affine fibers:
\item each connection form $\om$ on $P$ is a smooth $G$-invariant section of the fiber bundle $\tilde P\to P$
or a smooth section of the fiber bundle $\tilde P/G\to B$.
\end{enumerate}
\end{lemma}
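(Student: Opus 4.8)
The plan is to verify the five properties in turn, each by a direct local computation using a principal $G$-bundle structure of $p:P\to B$. The basic object is the bundle $\tilde P\to P$ whose fiber over $p$ is the affine space of linear splittings $l_p:T_pP\to\mathfrak g$ of the inclusion $\mathfrak g\hookrightarrow T_pP$, $\xi\mapsto X_\xi(p)$; since the set of such splittings of a fixed surjection (here the map $T_pP\to T_pP/(\text{vertical})$ read backwards, or rather the vertical projection) is an affine space modeled on $\on{Hom}(T_pP/\mathfrak g,\mathfrak g)\cong\on{Hom}(T_{p(p)}B,\mathfrak g)$, property (1) for the fibers of $\tilde P\to P$ is immediate, and the affine structure is preserved by the $G$-action $\al_g=\on{ad}g^{-1}\o l_p\o(R_g)_*^{-1}$, which gives (2). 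The contractibility of affine fibers yields that $\tilde P\to P$ and, after passing to the quotient by $G$, $\tilde P/G\to B$ are homotopy equivalences, completing (1) and (4).

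For (3) I would check that $g\mapsto\al_g$ is a right action (the cocycle identity $\al_{gh}=\al_h\o\al_g$ follows from $\on{ad}(gh)^{-1}=\on{ad}h^{-1}\o\on{ad}g^{-1}$ and $(R_{gh})_*=(R_h)_*\o(R_g)_*$) and that it is free: if $\al_g(l_p)=l_p$ for some $l_p$ then, evaluating on a vertical vector $X_\xi(p)$, one gets $\on{ad}g^{-1}(\xi)=\xi$ for all $\xi$ after accounting for how $(R_g)_*$ permutes vertical vectors, forcing $g$ to be central; a slightly finer argument comparing the values on horizontal vectors (which $(R_g)_*$ moves to a different fiber) then forces $g=e$. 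Freeness plus the manifold structure of $\tilde P$ (it is a smooth affine bundle over $P$, hence smooth) and properness of the $G$-action give that $\tilde P\to\tilde P/G$ is a smooth principal $G$-bundle. Finally (5) is essentially the definition: a connection form $\om\in\Om^1(P;\mathfrak g)$ assigns to each $p$ the linear map $\om_p:T_pP\to\mathfrak g$, and the connection axioms $\om_p(X_\xi(p))=\xi$ and $R_g^*\om=\on{ad}g^{-1}\o\om$ say exactly that $p\mapsto\om_p$ is a smooth section of $\tilde P\to P$ that is $G$-equivariant for the $\al$-action, i.e. descends to a smooth section of $\tilde P/G\to B$; conversely any such section is a connection form.

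I do not expect a serious obstacle here — the lemma is stated as "evident" and each item is a short unwinding of definitions. The only point requiring a little care is the freeness in (3): one must use that $(R_g)_*$ acts on $T_pP$ not as the identity even when restricted in a way that mixes the vertical subspace with horizontal complements, so the naive computation on vertical vectors alone only shows $g$ is central, and one genuinely needs to test $\al_g(l_p)=l_p$ against vectors transverse to the fiber (equivalently, to observe that $\al_g$ covers the map $R_g:P\to P$, which is fixed-point-free for $g\ne e$, so $\al_g$ can have no fixed points at all for $g\ne e$). Once that observation is made, freeness is immediate and the rest is bookkeeping. I would therefore present the proof by stating the affine model for the fiber, recording the cocycle identity, giving the one-line "covers $R_g$" argument for freeness, and identifying connection forms with equivariant sections, leaving the routine smoothness verifications to the reader.
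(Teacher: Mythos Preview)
Your proposal is correct; the paper gives no proof at all, merely prefacing the lemma with ``The following lemma is evident.'' Your direct verification by unwinding the definitions---in particular the clean observation that $\alpha_g$ covers the free map $R_g:P\to P$ and hence is itself free---is exactly the kind of routine check the author had in mind, so there is nothing to compare.
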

Define a differential 1-form $\varkappa$ on $\tilde P$ with values
in $\mathfrak g$ as follows. For a tangent vector $w$ at
$l_p\in\tilde P$, put $\varkappa(w)=l_p(\tilde p_*w)\in\mathfrak g$.
The following lemma follows from the definitions directly.

\begin{lemma}\label{kappa}
The form $\varkappa$ have the following properties:
\begin{enumerate}
\item  $\varkappa$ is a connection form on the principal $G$-bundle $\tilde P$;
\item $\varkappa$ is invariant under the natural action of of the group of automorphisms of
the principal $G$-bundle $P$ on $\tilde P$;
\item Let $\om$ be a connection form on $P$ and $s:P\to\tilde P$ is the corresponding section.
Then we have $\om=s^*\varkappa$.
\end{enumerate}
\end{lemma}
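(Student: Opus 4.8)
The plan is to verify the three properties of the form $\varkappa$ directly from its definition $\varkappa(w)=l_p(\tilde p_*w)$ for $w$ a tangent vector at $l_p\in\tilde P$, where $\tilde p:\tilde P\to P$ is the bundle projection. The statement is asserted to "follow from the definitions directly," so the proof should be short; the only mildly nontrivial point is keeping track of how the action $g\mapsto\al_g$ on $\tilde P$ interacts with the action $g\mapsto R_g$ on $P$.

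First I would check that $\varkappa$ is $\mathfrak g$-valued and smooth (immediate from the construction of $\tilde P$ as a smooth fiber bundle over $P$, item (3) of Lemma \ref{tilde P}). For property (1), I must verify the two defining conditions of a connection form on the principal $G$-bundle $\tilde P\to\tilde P/G$ from Lemma \ref{tilde P}(3). For the fundamental vertical vector fields: fix $\xi\in\mathfrak g$ and let $Y_\xi$ be the vector field on $\tilde P$ generated by the action $g\mapsto\al_g$. Since this action covers the action $R_g$ on $P$ only up to the adjoint twist $\on{ad}g^{-1}$, I would compute $\tilde p_*(Y_\xi(l_p))$; the key observation is that differentiating $\al_{\exp(t\xi)}=\on{ad}\exp(-t\xi)\o l_{R_{\exp(t\xi)}p}\o(R_{\exp(t\xi)})_*^{-1}$ at $t=0$ and pushing forward by $\tilde p$ gives exactly $X_\xi(p)$, the fundamental vector field on $P$. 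Then $\varkappa(Y_\xi(l_p))=l_p(X_\xi(p))=\xi$ by the very defining property of the linear maps $l_p$. For equivariance, $\al_g^*\varkappa=\on{ad}g^{-1}\o\varkappa$: apply $\varkappa$ to $(\al_g)_*w$, use that $\tilde p\o\al_g=R_g\o\tilde p$ so $\tilde p_*(\al_g)_*w=(R_g)_*\tilde p_*w$, and then the definition $\al_g(l_p)=\on{ad}g^{-1}\o l_p\o(R_g)_*^{-1}$ gives $\varkappa((\al_g)_*w)=\on{ad}g^{-1}(l_p(\tilde p_*w))=\on{ad}g^{-1}(\varkappa(w))$.

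For property (2), an automorphism $\Phi$ of the principal $G$-bundle $P$ induces a map on $\tilde P$ by $l_p\mapsto l_p\o(\Phi_*)^{-1}$ acting on tangent vectors at $\Phi(p)$; I would check that this induced map commutes with $\tilde p$ (it covers $\Phi$) and compute $\varkappa$ on its pushforward exactly as in the equivariance computation above, the $\Phi_*$ and $(\Phi_*)^{-1}$ cancelling to leave $\varkappa$ unchanged — here one uses that $\Phi$ is $G$-equivariant so the induced map is well-defined on $\tilde P$. For property (3), let $\om$ be a connection form on $P$ with corresponding section $s:P\to\tilde P$, $s(p)=\om_p$ (viewing $\om_p:T_pP\to\mathfrak g$ as an element of the fiber of $\tilde P$ over $p$, which is legitimate by Lemma \ref{tilde P}(5) since $\om_p(X_\xi(p))=\xi$). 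Then for $v\in T_pP$, $(s^*\varkappa)(v)=\varkappa(s_*v)=\om_p(\tilde p_*s_*v)=\om_p((\tilde p\o s)_*v)=\om_p(v)$ since $\tilde p\o s=\on{id}_P$. Hence $s^*\varkappa=\om$.

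The main obstacle, such as it is, will be bookkeeping the adjoint twist in the $G$-action on $\tilde P$: one must be careful that $\al_g$ genuinely covers $R_g$ (so that $\tilde p$ is $G$-equivariant in the appropriate sense) and that the differentiated action recovers the fundamental vector field on $P$ without an extra adjoint term — the point being that the adjoint factor $\on{ad}g^{-1}$ in $\al_g$ is "invisible" to $\tilde p_*$ because $\tilde p$ forgets the linear-map data. Once that is pinned down, all three items are one-line verifications, matching the paper's claim that the lemma is immediate.
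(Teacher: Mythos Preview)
Your proposal is correct and matches the paper's approach exactly: the paper gives no proof at all, stating only that the lemma ``follows from the definitions directly,'' and your verification is precisely the direct unwinding of the definitions that this phrase points to. Your bookkeeping of the adjoint twist in $\al_g$ and the observation that $\tilde p\circ\al_g=R_g\circ\tilde p$ are the right ingredients, and all three items check out as you describe.
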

The connection $\varkappa$ is called  {\bfseries a canonical
connection}. We denote by $K$ the curvature form of $\varkappa$.

Now we apply the Chern-Weil homomorphism to the canonical connection
$\varkappa$. Assume that the Lie group $G$ is reductive. It is known
that the space of invariant polynomials $I(\mathfrak g)$ on the Lie
algebra $\mathfrak g$ is a free algebra generated by a finite set of
homogeneous polynomials.

Let $F\in I(\mathfrak g)$ is a homogeneous polynomial of degree $p$.
Denote by $F(K)$ the differential $2p$-form on $\tilde P/G$ obtained
by the substitution of $K$ instead of a variable of $F$ with the
following alternation. The form $F(K)$ is a closed form on the space
$\tilde P/G$ and its cohomology class depends only on the structure
of the principal $G$-bundle $P$. This cohomology class is called
{\bfseries the characteristic class} of $P$ corresponding to the
polynomial $F$. Since, by (1) of lemma \ref{tilde P}, the spaces
$\tilde P/G$ and $B$ are homotopy equivalent, we obtain the
homomorphism $I(\mathfrak g)\to H^*(B,\mathbb R)$ which is called
the characteristic Chern-Weil homomorphism.

Let $\om$ be a connection form on the principal $G$-bundle $P\to B$
and $s:B\to \tilde P/G$ the corresponding section of the bundle
$\tilde P/G\to B$. Then $s^*F(K)=F(R)$, where $R$ is the curvature
form of $\om$, and the map $F\to F(K)$ defines the standard
characteristic Chern-Weil homomorphism associated with the
connection $\om$. It is clear that it does not depend on the choice
of $\om$.

Consider the covariant functors $P\to \tilde P$, $P\to B$, $P\to
\tilde P/G$, the functor morphisms $\tilde P\to\tilde P/G$ and
$\tilde P/G\to B$, and their extensions to the category of $\mathcal
P_n(G)$-spaces.

Moreover, consider the contravariant functors $P\to \Om^*(\tilde
P)$, $P\to\Om^*(\tilde P/G)$, and the functor morphism $\Om^*(\tilde
P/G)\to \Om^*(B)$. We have their extensions to the category of
$\mathcal P_n(G)$-spaces.

Thus, we get the de Rham complexes $\Om(\tilde Y)$, $\Om^*(\tilde
Y/G)$, and $\Om^*(B_Y)$ for any $\mathcal P_n(G)$-space $Y$ and
their cohomologies $H^*(\tilde Y)$, $H^*(\tilde Y/G)$, and
$H^*(B_Y)$. It is clear that the characteristic homomorphism
$I(\mathfrak g)\to \Om^*(\tilde P/G)$ is extended to the
characteristic homomorphisms $I(\mathfrak g)\to \Om^*(\tilde Y/G)$
and $I(\mathfrak g)\to H^*(\tilde Y/G)$ for any $\mathcal
P_n(G)$-space $Y$.

\begin{remark*}
The construction of the characteristic homomorphism $I(\mathfrak
g)\to \Om^*(\tilde P/G)$ is based on the statement (2) of lemma
\ref{kappa}. For a $\mathcal P_n(G)$-space $Y$, consider a morphism
$\tilde Y/G\to B_Y$ and the corresponding homomorphism $H^*(B_Y)\to
H^*(\tilde Y/G)$. In contrast to the isomorphism $H^*(B)\to
H^*(\tilde P/G)$ (see (1) of lemma \ref{tilde P}), the homomorphism
$H^*(Y_B)=H^*(\tilde Y/G)$ is not an isomorphism in general.
Therefore, the image of the characteristic homomorphism $I(\mathfrak
g)\to H^*(\tilde Y/G)$ cannot be projected to $H^*(Y_B)$.
\end{remark*}

Now we show how to extend the characteristic homomorphism to
$\mathcal P_n(G)$-spaces.

Let $\Psi$ be a full $\mathcal P_n(G)$-atlas on a $\mathcal
P_n(G)$-space $Y$ and let $\Ph(B_Y)$ be the corresponding $\mathcal
M_n$-atlas on the base $B_Y$. Let us take an $n$-dimensional
manifold $M$ and a base $\mathcal U$ of the topology of $M$
consisting of contractible open subsets of $M$. Assume that the
$\mathcal M_n$-atlas $\Ph(B_Y)$ is formed by the charts of the type
$(U,k)$, where $U\in\mathcal U$. For each $U\in\mathcal U$, let us
choose a $\mathcal P_n(G)$-chart $(P_U,\tilde k)\in\Psi$ which
induces a $\mathcal M_n$-chart $(U,k)$ on $B_Y$. Since $U$ is
contractible, for any $U,V\in\mathcal U$ and each embedding
$\si:U\to V$ there is a unique morphism of principal $G$-bundles
$P_U\to P_V$ which projects to $\si$.

Consider the \v{C}ech-De Rham complex $(\check C_\mathcal
U(\Ph(X),\Om^*),D)$ for $\Ph(B_Y)$. By the arguments above, each
string of composable morphisms of the $\mathcal M_n$-atlas
$\Ph(B_Y)$
$$
\check C_\mathcal U(\Ph(B_Y),\Om^p):
\prod\limits_{U_0}\Om^p(U_0)\stackrel{\de}{\longrightarrow}
\prod\limits_{U_0\stackrel{\si_1}{\longrightarrow}U_1}\Om^p(U_0)
\stackrel{\de}{\longrightarrow}\prod\limits_{U_0\stackrel{\si_1}{\longrightarrow}U_1
\stackrel{\si_1}{\longrightarrow}U_2}\Om^p(U_0)\stackrel{\de}{\longrightarrow}...,
$$
could be covered by the string of composable morphisms of $\mathcal
P_n(G)$-atlas on $Y$. Applying the extension of the functor $\tilde
P\to\tilde P/G$ to the latter string we get the string
$$
 \prod\limits_{U_0}\Om^p(P_{U_0})\stackrel{\de}{\longrightarrow}
\prod\limits_{U_0\stackrel{\si_1}{\longrightarrow}U_1}\Om^p(P_{U_0})
\stackrel{\de}{\longrightarrow}\prod\limits_{U_0\stackrel{\si_1}{\longrightarrow}U_1
\stackrel{\si_1}{\longrightarrow}U_2}\Om^p(P_{U_0})\stackrel{\de}{\longrightarrow}...
$$
and then we have the corresponding \v{C}ech-De Rham complex $(\check
C_\mathcal U(\Ph(X),\Om^*),D)$ for $\Ph(B_Y)$.

\section{The first Chern class for the Reeb foliation}

\subsection{The cohomologies $H^*(W_n)$ and $H^*(W_n,\text{GL}(n,\R))$}
Let $W_n$ be the algebra of formal vector fields in $n$ variables,
i.e. the topological vector space of $\infty$-jets at $0$ of smooth
vector fields on $\R^n$ with the bracket induced by the Lie bracket
of vector fields on $\R^n$. Consider $\R$ as a trivial $W_n$-module.
The complex $C^*(W_n)=\{C^q(W_n),d^q\}$ of standard continuous
cochains of $W_n$ with values in $\R$ is defined as follows:
$C^q(W_n)$ is the space of continuous skew-symmetric $q$-forms on
$W_n$ with values in $\R$ and the differential $d^q:C^q(W_n)\to
C^{q+1}(W_n)$ is defined by the following formula:
$$
(d^q c)(\xi_1,\dots,\xi_{q+1})=
\sum_{i,j}(-1)^{i+j}c([\xi_i,\xi_j],\xi_1,\dots,
\widehat{\xi_i},\dots,\widehat{\xi_j},\dots,\xi_{q+1}),
$$
where $c\in C^q(W_n)$, $\xi_1,\dots,\xi_{q+1}\in W_n$, and, as
usual, $\hat\xi$ means that the term $\xi$ is omitted.  We denote
the cohomology of this complex by $H^*(W_n)=\{H^p(W_n)\}$. The
natural action of $\text{GL}(n,\R)$ on $\R^n$ induces an action of
$\text{GL}(n,\R)$ on $C^*(W_n)$ by automorphisms of this complex.
Then we have the subcomplex of relatives cochains
$C^*(W_n,\text{GL}(n,\R))$ of $W_n$ with respect to
$\text{GL}(n,\R)$ consisting of $\text{GL}(n,\R)$-invariant cochains
from $C^*(W_n)$. We denote the cohomology of this complex by
$H^*(W_n,\text{GL}(n,\R))=\{H^p(W_n,\text{GL}(n,\R))\}$.

Recall some facts about the cohomology  $H^*(W_n,\on{GL}(n,\R))$
(\cite{BR}, \cite{F}, \cite{God}). By definition, $C^*(W_n)$ is a
graded differential algebra and the  differential $d$ is an
antiderivation of degree 1. For $\xi^i\in\R[[\R^n]]$ and
$\xi=\sum_{i=1}^n\xi^i\frac{\p}{\p x^i}\in W_n$, put
$$
c^i_{j_1\dots j_r}(\xi)=\frac{\p^r\xi^i}{\p x^{j_1}\dots\p
x^{j_r}}(0),
$$
where $x^i$ $(i=1,\dots,n)$ are the standard coordinates in $\R^n$.
By definition, we have  $c^i_{j_1\dots j_r}\in C^1(W_n)$. Moreover,
$c^i_{j_1\dots j_r}$  for $r=0,1,\dots$ and $i,j_1\dots
j_r=1,\dots,n$ are generators of the $DG$-algebra $C^*(W_n)$. Since
$d=\{d^q\}$ is an antiderivation of degree 1 of $C^*(W_n)$, it is
uniquely determined by the following conditions:
\begin{equation}\label{dc}
dc^i_{j_1\dots j_r}= \sum_{0\le k\le r}\sum_{s_1<\dots<
s_k}\sum_{l=1}^nc^i_{lj_1\dots\widehat{j_{s_1}}\dots
\widehat{j_{s_k}}\dots j_r}\wedge c^l_{j_{s_1}\dots j_{s_k}}.
\end{equation}
Put
$$
\ga=(c^i_j),\quad \Psi^i_j=\sum_{k=1}^nc^i_{jk}\wedge
c^k,\quad\on{and}\quad  \Psi=(\Psi^i_j).
$$
It is known that
$$
\Psi_p=\on{tr}(\underbrace{\Psi\wedge\dots\wedge\Psi}_{\text {p
times}})\quad (p=1,\dots,n)
$$
are cocycles of $C^*(W_n,\on{GL}_n(\R))$ and the cohomology classes
of these cocycles  generate $H^*(W_n,\on{GL}_n(\R))$. The cohomology
class of $\Psi_p$ is called $p$th formal Chern class.

\subsection{The space of frames of infinite order and the Gelfand-Kazhdan
form }\label{S(M)} Let $M$ be a connected orientable $n$-dimensional
smooth manifold. Denote by $S(M)$ the space of frames of infinite
order of $M$, i.e. $\infty$-jets at $0$ of germs at $0$ of smooth
regular at $0\in\R^n$ maps from $\R^n$ into $M$. It is known that
$S(M)$ is a manifold with model space $\R^\infty$ (\cite{BR}).

Define the canonical Gelfand-Kazhdan 1-form $\om$ with values in
$W_n$ on $S(M)$ (\cite{G-K} and \cite{BR}). Let $\tau$ be a tangent
vector at $s\in S(M)$ and  let $s(u)$ be a curve on $S(M)$ such that
$\tau=\frac{ds}{du}(0)$. One can represent $s(u)$ by a smooth family
$k_u$ of germs at $0$ of regular at $0\in\R^n$ maps $\R^n\to M$,
i.e. $s(u)=j^\infty_0k_u$. Then put
$$
\om(\tau)=-j_0^\infty\frac{d}{du}(k_0^{-1}\o k_u)(0).
$$

Let $c\in C^q(W_n)$. For each $s\in S(M)$ and $X_1,\dots,X_q\in
T_s$, put
$$
\om_c(X_1,\dots,X_q)=c(\om(X_1),\dots,\om(X_q)).
$$
It is known that $c\mapsto\om_c$ is a homomorphism of the complexes
$\al:C^*(W_n)\to\Om^*(S(M))$. Moreover, we have
$\al(C^*(W_n,\text{GL}(n,\R)))=\Om^*(S(M)/\text{GL}(n,\R))$.

It is easy to check that $\be=(\be^i_j)=-(\al(c^i_j))$ is a
connection form on a principal $\on{GL}_n(\R)$-bundle $S(M)\to
S(M)/\on{GL}_n(\R)$, $R=d\be+\be\wedge\be$ is the curvature form of
this connection and the image of the formal Chern class $\Psi_p$
under $\al$ equals $\al(\Psi_p)=Tr(\underbrace{R\wedge\dots\wedge
R}_{\mbox{$p$ times}})$.

{\bf Case $n=1$.} Now apply the constructions above to the case
$n=1$. Let $x$ be a coordinate on $M_1$ near $z_0\in M_1$ and
$k=k(u)$ be a smooth map $\R\to M_1$ regular at $0\in\R$. By
definition, $s=j_0^\infty k\in S(M_1)$. Thus, one can consider
$x_0=k(0),x_p=\frac{d^p k}{dt^p}(0)$ $(p=1,\dots)$ as coordinates on
$S(M_1)$, where $x_1\ne 0$. Let $s(u)$ be a curve in $S(M_1)$. It
can be obtained as follows: $s(u)=j^\infty_0k(u)(t)$, where for each
$u$, $k(u)(t)$ is a smooth map $\R\to M_1$ regular at $t=0$. Then
$\ta=\frac{d}{du}j_0^\infty k(u)(t)|_{u=0}$ is a tangent vector to
the curve $s(u)$ at $s(0)$. By definition, we have
$\om(\ta)=-j_0^\infty \frac{d}{du}( k^{-1}(0)\o k(u))|_{u=0}$.

Let $\om=(\om_0,\om_1,\om_2,\dots)$ and let $x_p$ $(p=0,\dots)$ be
the coordinates on $S(M_1)$ as above. By construction, we have
\begin{equation}
\begin{split}
\om_o&=-\frac{dx_0}{x_1},\quad \om_1=\frac{x_2dx_0}{x_1^2}-\frac{dx_1}{x_1},\\
\om_2=&\left(\frac{x_3}{2x_1^2}-\frac{x_2^2}{x_1^3}\right)dx_0+\frac{x_2}{x_1^2}dx_1-
\frac{dx_2}{2x_1}.
\end{split}
\end{equation}
 Consider the action of $\text{GL}(1,\R)=\R^*$ on $S(M_1)$. By definition, we have  for $\la\in\R^*$ and $s=(x_p)$: $\la s=(\la^px_p)$.
Then one can take $y_0=x_0,\,y_p=(x_1^p)^{-1}x_p$ ($p=2,\dots$) for
the coordinates on $S(M_1)/\text{GL}(1,\R)$. By definition, the
first Chern class $c_1$ in these coordinates is defined by the
2-form $c_{1,y}=dy_2\wedge dy_0$ on $S(M_1)/\text{GL}(1,\R)$. It is
clear that this form is exact if $M_1$ is a smooth manifold. But we
will show that, in general, this form is not exact if $M_1$ is a
${\mathcal D}_1$-space.

\subsection{Reeb's foliation}
Describe Reeb's foliation on the sphere $\mathbb S^3$. Consider the
sphere $\mathbb S^3$ in $\R^4=\R^2\x\R^2$ given by the equation
$|x|^2+|y|^2=2$, where $|x|$ is the standard norm in $\R^2$. Take
two subsets $\mathbb S^3_i$ $(i=1,2)$ of $\mathbb S^3$  defined by
the equations $|x|^2\le |y|^2$ and $|y|^2\le |x|^2$, respectively.
The projection $(x,y)\to x$ induces the structure of fiber bundle on
$\mathbb S^3_1$  with base $D_1=\{|x|^2\le 1\}$ and, for each $x\in
D_1$, the fiber is the circle $\mathbb S^1_x$ given by the equation
$|y|^2=2-|x|^2$. Therefore, $\mathbb S_1^3$ is diffeomorphic to
$D_1\x \mathbb S^1$. Similarly, $\mathbb S^3_2$ is diffeomorphic to
$D_2\x \mathbb S^1$, where $D_2=\{|y|^2\le 1\}$ and the fiber over
$y\in D_2$ is given by the equation $|x|^2=2-|y|^2$. By definition,
we have $\mathbb S^3=\mathbb S^3_1\cup \mathbb S^3_2$.

Consider a smooth function $f(t)$  on the interval $|t|<1$
satisfying the following conditions:
\begin{align}\label{f}
&f(0)=0,\,f(t)\ge 0,\,f(-t)=f(t),\\ \label{f1}\lim_{t\to\pm
1}&\frac{d^pf}{dt^p}(t)=\infty,\,\lim_{t\to\pm
1}\frac{d^p}{dt^p}\frac1{f'(t)}=0,\,\mbox{for $p=0,1,\dots$}
\end{align}

Consider the foliation of codimension $1$ on $\mathbb S_1^3$ with
the leaf $L=\mathbb S^1\x \mathbb S^1$ and the leaves $L_\al$ for
each $\al\in\R$, consisting of the points of the type $(x,|y|e^{2\pi
i(\al+f(|x|)})$, where $x$ is an interior point of $D_1$ and we
regard $\R^4$ as $\R^2\x\C$. It is clear that the leaf $L$ is
compact and the leaf $L_\al$ is diffeomorphic to $\R^2$.

Similarly, we define a foliation of codimension $1$ on $\mathbb
S^3_2$ using, in general, another function $f$. Combining these two
foliations we get a foliation on $\mathbb S^3$ called the Reeb
foliation and is denoted by $\mathfrak F$.

Consider the curves $\ga_1$: $\al\to (0,0,\sqrt 2e^{2\pi\al i})$
($\al\in\R)$ and $\ga_2$: $t\to\left(\frac{t}{\sqrt
2},\frac{t}{\sqrt 2},\sqrt{2-t^2}\right)$ $(|t|<\sqrt 2)$ on $S^3$.
By construction, the curves $\ga_1$ and $\ga_2$ are transverse to
Reeb's foliation on $\mathbb S^3$ and, then, can be considered as
$\mathcal D_1$-charts on the space of leaves $\mathbb S^3/\mathfrak
F$. Since $\ga_1(\al)\in L_\al$ and $\ga_2(t)\in L_\al$ for
$f(|t|)+\al=0$, the map $\vh$: $\al=-f(t)$ ($0<t<1)$ is a morphism
$\vh$ of ${\mathcal D}_1$-charts $\ga_2\to\ga_1$.

Denote by $\be_i$  and $y_i$ $(i\neq 1)$ the extension of the
coordinates $\al$ and $t$ to  the ${\mathcal D}_1$-space $S(\mathbb
S^3/\mathfrak F)/\text{GL}(1,\R)$. By definition, the $2$-form
$c_1$, defining the first Chern class, is given in these coordinates
by the forms $c_{1,\be}=d\be_2\wedge d\be_0$ and $c_{1,y}=dy_2\wedge
dy_0$, respectively. Evidently we have $\vh^*c_{1,\be}=c_{1,y}$.

Denote by $S^2(\mathbb S^3/\mathfrak F)$ the space of frames of the
second order of $\mathbb S^3/\mathfrak F$. Clearly, $c_1$ may be
considered as a 2-form on $S^2(\mathbb S^3/\mathfrak
F)/\text{GL}(1,\mathbb R)$.

\begin{theorem}\label{first}
The first Chern class $c_1$ is non-trivial in the
complex\footnote{In the original text it was stated that $c_1$ is
non-trivial in the complex $\Om^*(S(\mathbb S^3/\mathfrak
F)/\text{GL}(1,\mathbb R))$. Unfortunately, the original proof
contained a gap, on the other hand, it was easy to adapt it to the
proof of the current statement of the theorem. Note that if $M_1$ is
a manifold, then the cohomology of the complexes
$\Om^*(S(M_1)/\text{GL}(1,\mathbb R))$ and
$\Om^*(S^2(M_1)/\text{GL}(1,\mathbb R))$ are isomorphic. This is
generally not true if $M_1$ is a $\mathcal D_1$-space.}

$\Om^*(S^2(\mathbb S^3/\mathfrak F)/\text{\rm GL}(1,\mathbb R))$.
\end{theorem}

\begin{proof}

First we write the extension of the morphism $\vh$ to $S(\mathbb
S^3/\mathfrak F)$. Let $f(x)$ and $g(u)$ be two functions and let
$h=g\o f$. Recall the classical Fa\`a di Bruno formula for the $n$th
derivative $h^{(n)}$:
\begin{equation}\label{Faa}
h^{(n)}=n!\sum_{k=1}^n\frac{g^{(k)}\o
f}{k!}\sum_{i_1+\dots+i_k=n}\frac{f^{(i_1)}}{i_1!}\dots\frac{f^{(i_k)}}{i_k!}
\end{equation}

Applying \eqref{Faa} for $g=-f(t)$ and $f=t(u)$ we get
$$
\al_0=-f(t_0),\quad
\al_n=-n!\sum_{k=1}^n\frac{f^{(k)}}{k!}\sum_{i_1+\dots+
i_k=n}\frac{t_{i_1}}{i_1!}\dots\frac{t_{i_k}}{i_k!},\quad n\ge 1
$$
Then the extension of the morphism $\vh$ to $S(\mathbb S^3/\mathfrak
F)/\text{GL}(1,\R)$ is defined by the formulas
\begin{equation}\label{1}
\begin{split}
\be_0&=-f(y_0),\\
\be_n&=(-1)^{n-1}\left(n!\sum_{k=1}^{n-1}\frac{1}{k!}\frac{f^{(k)}(y_0)}{(f')^n(y_0)}\sum_{i_1+\dots
i_k=n}
\frac{y_{i_1}}{i_1!}\dots\frac{y_{i_k}}{i_k!}+\frac{f^{(n)}(y_0)}{(f')^n(y_0)}\right),
\end{split}
\end{equation}
where $y_1=1$, $n\ge 2$, $y_i$, and $\be_i$ are the coordinates on
$S(S^3/\mathfrak F)/\text{GL}(1,\R))$ corresponding to $t_i$ and
$\al_i$. In particular,  the extension of the morphism $\vh$ to
$S^2(\mathbb S^3/\mathfrak F)/\text{GL}(1,\R)$ is defined by the
formulas
$$\be_0=-f(y_0),\quad
\be_2=-\frac{y_2}{f'(y_0)}-\frac{f''(y_0)}{(f'(y_0))^2}.$$ By
induction with respect to $n$ from \eqref{Faa} for
$h(t)=\frac{1}{f'(t)}$ and \eqref{f}  we get for $n>1$
\begin{equation}\label{n}
\lim_{t\to\pm 1}\frac{f^{(n)}(t)}{(f'(t))^n}=0\quad\text{and}\quad
\lim_{t\to\pm 1}\frac{d}{dt}\frac{f^{(n)}(t)}{(f'(t))^n}=0.
\end{equation}

In particular, \eqref{n} implies $\lim_{y_0\to 1}\be_n=0$ for $n>1$.

Assume that the class $c_1$ is trivial. Then there is a form
$\ga\in\Om^1(S^2(\mathbb S^3/\mathfrak F)/\text{GL}(1,\mathbb R))$
such that $c_1=d\ga$. This means that, if $\ga_\be$ and $\ga_y$ are
the expressions of the form $\ga$ in the coordinates $\be_0,\be_2$
and $y_0,y_2$, respectively, we have $d\ga_\be=c_{1,\be}$,
$d\ga_{y}=c_{1,y}$, and $\vh^*\ga_\be=\ga_y$.

Let $$\ga_\be=\ga_0d\be_0+\ga_2d\be_2,$$ where $\ga_i$ are smooth
functions of $\be_0$, $\be_2$. So we have
$$\ga_\be=\be_2d\be_0+\la,$$ where $$\la=\la_0d\be_0+\la_2d\be_2$$ is
a closed $1$-form. Moreover, we have
$$
\vh^*\ga_\be=A_0dy_0+A_2dy_2,
$$
where $A_0$ and $A_2$ are smooth functions of $y_0$, $y_2$. By
definition, the $1$-form $\vh^*\ga$ has a smooth extension to a
neighborhood of each point $(1,y_2)$.

On the other hand, we have
\begin{equation}\label{2}
\vh^*\ga_\be=\left( y_2+\frac{f''}{f'}-(\la_0\o\vh)\cdot
f'\right)dy_0+(\la_2\o\vh)\cdot d(\be_2\o\vh),
\end{equation}
where $f=f(y_0)$. From \eqref{2} we get
\begin{multline}\label{3}
A_0= y_2+\frac{f''}{f'}-(\la_0\o\vh)\cdot f'-(\la_2\o\vh)\cdot\left(
\left(\frac{f''}{(f')^2}\right)'+y_2\left(\frac{1}{f'}\right)'
 \right).
\end{multline}
Fix a $\be_0$. There exists a sequence $\{y_{0,n}\}$ such that
$\lim_{n\to\infty}y_{0,n}= 1$ and $-f(y_{0,n})\equiv\be_0\mod 1$. It
is clear that
$$\lim_{n\to\infty}\frac1{f'(y_{0,n})}A_0(y_{0,n},y_2)=0.$$ By this,
\eqref{3}, \eqref{f}, and \eqref{n}, we get $\la_0(\be_0,0)=0$.
Therefore, we have
\begin{equation}\label{4}
\la_0(\be_0,\be_2)=\mu(\be_0,\be_2)\be_2,
\end{equation}
where $\mu(\be_0,\be_2)$ is a smooth function of $\be_0,\be_2$ and
$\mu(\be_0,0)=\frac{\p\la_0}{\p\be_2}(\be_0,0)$.

Consider $\lim_{n\to\infty}\frac{\p A_{0}}{\p y_2}( y_{0,n},y_2)$.
Since  $\lim_{y_0\to 1}f'\frac{\p\be_2}{\p y_2}=-1$, we get
$$
\frac{\p A_0}{\p y_2}( 1,y_2)=1+\frac{\p\la_0}{\p \be_2}(\be_0,0).
$$
Since the form $\la$ is closed, we have
\begin{equation}\label{5}
\frac{\p\la_2}{\p\be_0}(\be_0,0)=\frac{\p\la_0}{\p\be_2}(\be_0,0)=c=\text{const}.
\end{equation}
By definition, the form $\la$ is periodic with respect to $\be_0$
with period $1$. This implies that $\la_2$ and
$\frac{\p\la_2}{\p\be_0}$ are periodic with respect to $\be_0$ with
the same period as well. It is easy to see that, if the derivative
of a smooth periodic function is constant, this constant equals $0$.
Then \eqref{5} implies $\frac{\p\la_0}{\p\be_2}(\be_0,0)=c=0$. Thus,
$\mu(\be_0,\be_2)=\nu(\be_0,\be_2)\be_2$, where $\nu(\be_0,\be_2)$
is a smooth function of $\be_0$ and $\be_2$, and, by \eqref{4}, we
have $\la_0(\be_0,\be_2)=\nu(\be_0,\be_2)\be_2^2$.

Put now in  \eqref{3} $y_2=0$. We get
\begin{multline*}
A_0(y_0,0)=
\frac{f''}{f'}-\nu\left(-f,-\frac{f''}{(f')^2}\right)\frac{(f'')^2}{(f')^3}-\la_2\left(-f,-\frac{f''}{(f')^2}\right)\left(\frac{f''}{(f')^2}\right)',
\end{multline*}
 where $f=f(y_0)$. This equation can rewritten in the following
form
$$
\frac{f''}{f'}=\frac{A_0(y_0,0)+\la_2\left(-f,-\frac{f''}{(f')^2}\right)\left(\frac{f''}{(f')^2}\right)'}{1-\nu\left(-f,-\frac{f''}{(f')^2}\right)\frac{f''}{(f')^2}}.
$$

Consider a sequence $\{y_{0,n}\}$ as above and substitute it to the
last equality. The right hand side of the equality tends to $A(1,0)$
as $n$ tends to $+\infty$. The properties \eqref{f1} of the function
$f$ imply that $\frac{f''}{f'}$ tends to $+\infty$ as $n$ tends to
$+\infty$. We get a contradiction. This contradiction proves the
theorem.
\end{proof}

Now we study the geometrical meaning of the first Chern class of
Reeb's foliation\footnote{On April 19 of 2013 M.V.\,Losik informed
A.\,Galaev in an email that the first Chern class defines the Reeb
foliation up to the orientation. Unfortunately, the proof of this
statement is missing. The following sentence suggests that the idea
of the proof was to show that the first Chern class determines the
holonomy group of the compact leaf.}. We recall that Reeb's
foliation is determined up to isomorphism by the holonomy group of
the compact leaf $\mathbb S^1\x \mathbb S^1$ (see \cite{Ser}) and by
some considerations on orientation studied in details in \cite{Miz}.


\end{document}